\definecolor{celestialblue}{rgb}{0.29, 0.59, 0.82}
\newcounter{dummy}\numberwithin{dummy}{section}
\theoremstyle{definition}
\theoremstyle{plain}
\newtheorem{Theorem}[dummy]{Theorem}
\newtheorem{MainTheorem}{Theorem}
\newtheorem{Proposition}[dummy]{Proposition}
\newtheorem{Lemma}[dummy]{Lemma}
\theoremstyle{remark}
\crefname{Definition}{definition}{definitions}
\Crefname{Definition}{Definition}{Definitions}
\crefname{Theorem}{theorem}{theorems}
\Crefname{Theorem}{Theorem}{Theorems}
\crefname{MainTheorem}{theorem}{theorems}
\Crefname{MainTheorem}{Theorem}{Theorems}
\crefname{Corollary}{corollary}{corollaries}
\Crefname{Corollary}{Corollary}{Corollaries}
\crefname{Proposition}{proposition}{propositions}
\Crefname{Proposition}{Proposition}{Propositions}
\crefname{Lemma}{lemma}{lemmas}
\Crefname{Lemma}{Lemma}{Lemmas}
\crefname{Remark}{remark}{remarks}
\Crefname{Remark}{Remark}{Remarks}
\crefname{Example}{example}{examples}
\Crefname{Example}{Example}{Examples}
\crefname{Question}{question}{questions}
\Crefname{Question}{Question}{Questions}
\newcommand{\sete}[1]{\left\lbrace #1 \right\rbrace}
\newcommand{\setp}[2]{\left\lbrace #1 \; \middle| \; #2 \right\rbrace}
\newcommand{\compl}[1]{\overline{ #1 }}
\newcommand{\cl}[1]{\overline{ #1 }}
\newcommand{\abs}[1]{\left| #1 \right|}
\DeclareMathOperator{\Int}{Int}
\DeclareMathOperator{\SIN}{SIN}
\DeclareMathOperator{\eps}{\varepsilon}\DeclareMathOperator{\Id}{Id}
\title[Invariant automatic continuity]{Invariant automatic continuity for compact connected simple Lie groups}
\author{Laurens Diels}
\address{L.D., Analysis Section, KU Leuven, 3001 Leuven, Belgium}
\email{laurens.diels@kuleuven.be}
\author{Philip A. Dowerk}
\address{P.A.D., Institut f\"ur Geometrie, TU Dresden, 01062 Dresden, Germany}
\email{philip.dowerk@tu-dresden.de}
\begin{document}
\onehalfspace
	
\maketitle
\begin{abstract}
	In this note we prove that a homomorphism from a compact connected simple Lie group with the norm topology to any separable SIN group is automatically continuous. This generalizes a result by Dowerk and Thom. 
	Further, we prove some elementary characterizations of invariant automatic continuity. 
\end{abstract}

\section{Introduction}
Automatic continuity of group homomorphisms is a classical subject which dates back to a question of Cauchy on the continuity of endomorphisms of $(\mathbb{R},+)$, endowed with the usual topology. See Rosendal's survey \cite{RosendalSurvey} for more historical background on the subject. Recall that a topological group $G$ has \textit{automatic continuity} if every homomorphism from $G$ to any separable topological group is continuous. 
It follows from the work of Kallman \cite{Kallman}, see also \cite[Example 1.5]{RosendalSurvey}, that non-trivial compact connected Lie groups do not have automatic continuity, and in fact it is not known whether there exists a compact or even a locally compact group with automatic continuity.

In contrast, amongst other results, the second named author and Thom have proven in \cite{DowerkThom} that every homomorphism from a 
\begin{itemize}
	\item finite-dimensional projective unitary group $\mathrm{PU}(n),\ n\in\mathbb{N}$,
	\item finite-dimensional special unitary group $\mathrm{SU}(n),\ n\in\mathbb{N}$, 
\end{itemize}
endowed with the norm topology, to any separable SIN group is continuous. Following \cite{DowerkThom}, we say that these compact connected Lie groups have the \textit{invariant automatic continuity} property, abbreviated \textit{(IAC)}. On the other hand, the unitary group $\mathrm{U}(n)$ does have discontinuous homomorphisms into the separable SIN group $\mathbb{R}$, as one can show using a composition of the continuous determinant $\mathrm{U}(n)\to \mathrm{U}(1)$ with a discontinuous homomorphism $\mathrm{U}(1)\to\mathbb{R}$ (which can be constructed using the axiom of choice). 
The notion of invariant automatic continuity has also proven to be fruitful in the context of some compact totally disconnected groups (i.e.\ profinite groups), see the work of Le Ma\^itre and Wesolek \cite{LeMaitreWesolek}.

The main goal of this short article is to generalize the (IAC) results on $\mathrm{PU}(n)$ and $\mathrm{SU}(n)$ to compact connected simple Lie groups, which is proven in Section \ref{sec_cpt}, see Theorem \ref{th:IACcptsimple}. Here \textit{simple} means that the Lie group is non-abelian and has no non-trivial closed connected normal subgroups. 
\begin{MainTheorem}
	Every homomorphism from a compact connected simple Lie group to any separable SIN group is continuous.
\end{MainTheorem}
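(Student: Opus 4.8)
The plan is to reduce the continuity at the identity of an arbitrary homomorphism $\phi\colon G\to H$, with $H$ a separable SIN group, to the already established (IAC) property of the rank-one subgroups $\mathrm{SU}(2)$ and $\mathrm{PU}(2)=\mathrm{SO}(3)$ from \cite{DowerkThom}, gluing these together by means of the root space decomposition of $G$. First I would fix a maximal torus $T\le G$ together with its root system. For each root $\alpha$ one obtains a closed connected subgroup $S_\alpha\le G$ whose Lie algebra is the compact triple $\mathfrak{su}(2)_\alpha\subseteq\mathfrak{g}$; being a compact connected group with Lie algebra $\mathfrak{su}(2)$, each $S_\alpha$ is isomorphic either to $\mathrm{SU}(2)$ or to $\mathrm{SO}(3)=\mathrm{PU}(2)$. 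Composing the inclusion $S_\alpha\hookrightarrow G$ with $\phi$ yields an (a priori merely abstract) homomorphism $S_\alpha\to H$, which is continuous by the theorem of Dowerk and Thom, since both $\mathrm{SU}(2)$ and $\mathrm{PU}(2)$ are covered there.

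The key geometric input is that the subalgebras $\mathfrak{su}(2)_\alpha$ span $\mathfrak{g}$: the coroots span the Cartan subalgebra of the complexification and the root vectors span the root spaces, so that in the compact real form the $\mathfrak{su}(2)_\alpha$ together exhaust $\mathfrak{g}$. Hence I can choose finitely many roots $\alpha_1,\dots,\alpha_k$ so that the product map
\[
p\colon S_{\alpha_1}\times\cdots\times S_{\alpha_k}\to G,\qquad (x_1,\dots,x_k)\mapsto x_1\cdots x_k,
\]
has surjective differential at $(e,\dots,e)$, since this differential is $(X_1,\dots,X_k)\mapsto X_1+\cdots+X_k$ with image $\sum_i\mathfrak{su}(2)_{\alpha_i}=\mathfrak{g}$. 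By the submersion theorem there is then a neighbourhood $N$ of $e$ in $G$ and a smooth local section $\sigma=(\sigma_1,\dots,\sigma_k)\colon N\to S_{\alpha_1}\times\cdots\times S_{\alpha_k}$ with $p\circ\sigma=\mathrm{Id}_N$.

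The conclusion would then follow by writing, for $g\in N$,
\[
\phi(g)=\phi\bigl(\sigma_1(g)\bigr)\cdots\phi\bigl(\sigma_k(g)\bigr),
\]
where each factor $g\mapsto\phi(\sigma_i(g))$ is continuous, being the composition of the smooth map $\sigma_i\colon N\to S_{\alpha_i}$ with the continuous homomorphism $\phi|_{S_{\alpha_i}}$. As multiplication in $H$ is continuous, $\phi$ is continuous on $N$, hence continuous at $e$, hence continuous on all of $G$ because it is a homomorphism.

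The main obstacle is exactly the passage from continuity on the individual rank-one subgroups to continuity on $G$: topological generation of $G$ by the $S_\alpha$ is by itself insufficient, since continuity of a homomorphism on a generating family need not propagate to the ambient group — indeed $\mathrm{U}(n)$ already fails (IAC) although it is generated by its rank-one subgroups. What rescues the argument is the Lie-theoretic fact that the root $\mathfrak{su}(2)$'s span $\mathfrak{g}$, so that $p$ is a submersion and admits a continuous local section, turning $\phi$ near the identity into a finite product of manifestly continuous pieces. I note that one could, if desired, first replace $G$ by its universal cover $\widetilde{G}$, which is again compact (by Weyl's finiteness of the fundamental group), simply connected and simple, because $\phi$ is continuous if and only if its composite with the covering homomorphism is; however this reduction is not strictly needed for the argument above, as the root subgroups and the spanning property are available in $G$ directly.
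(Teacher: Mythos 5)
Your proposal is correct, but it takes a genuinely different route from the paper's. The paper never reduces to rank one: it proves directly (its \Cref{th:IACcptsimple}) that a compact connected simple Lie group $L$ of rank $r$ is \emph{invariant Steinhaus} with exponent $8r^2$, i.e.\ $1_L \in \Int W^{8r^2}$ for every symmetric conjugacy-invariant countably syndetic $W \subseteq L$, and then applies the Dowerk--Thom implication that invariant Steinhaus groups have (IAC) (\Cref{th:invSteinhaus}). Its key inputs are Cartan's conjugacy theorem (\Cref{th:conjMaxTorus}), a bi-invariant metric, and the Stolz--Thom length function $\lambda$ on a maximal torus with the bounded-generation estimate $s \in \left(t^L \cup t^{-L}\right)^{4mr^2}$ (\Cref{th:lambdaProp}): an uncountable conjugacy-invariant $W$ contains a non-central torus element $t$, and $8r^2$-fold products of $t^L \cup t^{-L} \subseteq W$ then swallow a ball around the identity. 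Your argument instead bootstraps from the rank-one instances $\mathrm{SU}(2)$ and $\mathrm{PU}(2) \cong \mathrm{SO}(3)$ of the Dowerk--Thom theorem, glued together by the root-space decomposition and a smooth local section of the product map $S_{\alpha_1} \times \cdots \times S_{\alpha_k} \to L$ furnished by the submersion theorem; this is sound (each $S_\alpha$ is compact, hence closed and embedded, and topologically isomorphic to $\mathrm{SU}(2)$ or $\mathrm{SO}(3)$, and the root $\mathfrak{su}(2)_\alpha$'s do span $\mathfrak{g}$ since the coroots span the Cartan part). What each approach buys: the paper's route yields a uniform quantitative statement --- an explicit Steinhaus exponent depending only on the rank, of which (IAC) is a corollary --- whereas yours yields only the qualitative continuity statement; in exchange, yours avoids the Stolz--Thom machinery entirely, needing only standard structure theory plus the known rank-one cases, and it isolates a transferable principle: (IAC) passes to a Lie group from finitely many closed (IAC) subgroups whose Lie algebras span, via a local section. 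One caveat on a non-load-bearing aside: $\mathrm{U}(n)$ does not illustrate the obstacle you describe, since the rank-one subgroups generating it are circles, which themselves fail (IAC); indeed every copy of $\mathrm{SU}(2)$ inside $\mathrm{U}(n)$ lies in $\mathrm{SU}(n)$ (being perfect, it has trivial determinant), so $\mathrm{U}(n)$ is not an example of continuity on an (IAC) generating family failing to propagate.
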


Further we obtain a few characterizations of invariant automatic continuity in Section 2, inspired by the work of Rosendal \cite{Rosendal} and Rosendal and Solecki \cite{RosendalSolecki} on automatic continuity. We do not assume topological groups to be Hausdorff.
\begin{MainTheorem}\label{th:main2}
	Let $(G,\tau)$ be a topological group. The following are equivalent.
\begin{enumerate}[label=(\roman*)]
	\item The topological group $G$ has the invariant automatic continuity property.
	\item Every homomorphism from $G$ to a $\aleph_0$-bounded SIN group is continuous.
	\item Every homomorphism from $G$ to a Polish SIN group is continuous.
	\item\label{it:universal} Every homomorphism from $G$ to a fixed universal Polish SIN group is continuous. 
	\item $\tau'\subseteq \tau$ for every $\aleph_0$-bounded SIN group topology $\tau'$ on $G$.
	\item $\tau'\subseteq \tau$ for every second countable SIN group topology $\tau'$ on $G$.
\end{enumerate}  
\end{MainTheorem}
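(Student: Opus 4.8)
The plan is to translate every continuity statement into a comparison of group topologies on $G$. For a homomorphism $\phi\colon(G,\tau)\to(H,\sigma)$ I write $\tau_\phi$ for the initial topology on $G$, generated by $\{\phi^{-1}(V):V\in\sigma\}$; then $\phi$ is continuous exactly when $\tau_\phi\subseteq\tau$. A routine verification shows that $\tau_\phi$ is again a group topology and that it inherits structure from $H$: preimages of conjugation-invariant sets are conjugation-invariant, so $\tau_\phi$ is SIN whenever $H$ is, and likewise $\tau_\phi$ is $\aleph_0$-bounded (resp.\ second countable) whenever $H$ is. I would use these facts together with the evident inclusions of target classes—every Polish SIN group is separable SIN, every separable SIN group is $\aleph_0$-bounded SIN, and the fixed universal group is itself Polish SIN. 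Since enlarging the class of admissible targets only strengthens the hypothesis, these inclusions give the easy implications (ii) $\Rightarrow$ (i) $\Rightarrow$ (iii) $\Rightarrow$ (iv) at once, and it then remains to close the cycle by proving (iv) $\Rightarrow$ (vi) $\Rightarrow$ (v) $\Rightarrow$ (ii).

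For (v) $\Rightarrow$ (ii) I would take a homomorphism $\phi\colon(G,\tau)\to H$ into an $\aleph_0$-bounded SIN group; by the above $\tau_\phi$ is an $\aleph_0$-bounded SIN group topology, so (v) forces $\tau_\phi\subseteq\tau$, i.e.\ continuity. For (iv) $\Rightarrow$ (vi) I would start from a second countable SIN group topology $\tau'$ on $G$. Quotienting by $N=\cl{\{e\}}^{\tau'}$—legitimate because $N$ is contained in every $\tau'$-neighbourhood of $e$, so every $\tau'$-open set is $N$-saturated—I may assume $\tau'$ is Hausdorff. Then $(G,\tau')$ is first countable, hence metrizable, and the SIN property lets the metric be chosen bi-invariant; its completion $\widehat{G}$ is separable and complete, that is Polish, and remains SIN. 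By universality $\widehat{G}$ embeds as a topological subgroup of the fixed universal Polish SIN group $\mathbb{U}$, so the composite $G\to\widehat{G}\hookrightarrow\mathbb{U}$ has initial topology exactly $\tau'$, and (iv) makes it continuous, giving $\tau'\subseteq\tau$.

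The heart of the argument, and the step I expect to be the main obstacle, is (vi) $\Rightarrow$ (v), where one must descend from arbitrary $\aleph_0$-bounded SIN topologies to second countable ones. Given an $\aleph_0$-bounded SIN topology $\tau'$ on $G$, I would invoke a Guran-type description: $\aleph_0$-boundedness means every continuous left-invariant pseudometric is separable, while the SIN property furnishes a neighbourhood basis at $e$ by balls of continuous bi-invariant pseudometrics. For each such pseudometric $d$ the induced topology $\tau_d$ is a second countable (by separability) SIN group topology on $G$, and $\tau'$ is the join $\bigvee_d\tau_d$ of this family; equivalently, $(G,\tau')$ embeds into a product of second countable SIN groups with the initial topology. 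Condition (vi) gives $\tau_d\subseteq\tau$ for every $d$, and since $\tau$ is a topology it contains the topology generated by $\bigcup_d\tau_d$, whence $\tau'=\bigvee_d\tau_d\subseteq\tau$. This establishes (v) and closes the cycle. The delicate point throughout is the bi-invariant refinement of Guran's theorem—realising an $\aleph_0$-bounded SIN group inside a product of \emph{second countable SIN} groups—and I would also record that the entire scheme presupposes the existence of a universal Polish SIN group, which I take from the literature.
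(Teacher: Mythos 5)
Your proposal is correct, but its architecture genuinely differs from the paper's. The paper splits the six conditions into three equivalences, each pivoting on (i): (i)$\Leftrightarrow$(ii)$\Leftrightarrow$(iii) is \Cref{th:IACTarget}, (i)$\Leftrightarrow$(iv) is \Cref{prop:Universal}, and (i)$\Leftrightarrow$(v)$\Leftrightarrow$(vi) is \Cref{th:IACTop}; its hard step is (iii)$\Rightarrow$(ii), where the SIN version of Guran's theorem (\Cref{th:GuranSIN}) is applied to the \emph{target}: $H/\cl{\sete{1_H}}$ is embedded into a product of second countable metrizable SIN groups, each factor is completed to a Polish SIN group (\Cref{th:separableMetrisableEmbedsPolish}), and (iii) is invoked componentwise. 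You instead close a single cycle (ii)$\Rightarrow$(i)$\Rightarrow$(iii)$\Rightarrow$(iv)$\Rightarrow$(vi)$\Rightarrow$(v)$\Rightarrow$(ii) and apply the Guran-type content on the \emph{source} side: an $\aleph_0$-bounded SIN topology $\tau'$ is the join of the topologies of continuous bi-invariant pseudometrics, each of which is second countable by $\aleph_0$-boundedness. That pseudometric statement is exactly the content of \Cref{th:GuranSIN} --- the paper's proof constructs precisely such bi-invariant pseudometrics, $d(gZ,hZ)=\ell(g^{-1}h)$, before quotienting --- so both proofs rest on the same key lemma, yours in its classical pseudometric form and the paper's in its embedding form; deferring it to the literature is defensible, but note it is the one ingredient the paper felt obliged to prove rather than cite. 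Two structural consequences of your routing: Doucha's universal group becomes load-bearing (your (iv)$\Rightarrow$(vi) embeds the completion into it), so the equivalence of the remaining five conditions formally depends on Doucha's theorem, whereas in the paper it is an add-on; this is harmless and easily repaired if desired, since the identical quotient-metrize-complete argument with an arbitrary Polish SIN target gives (iii)$\Rightarrow$(vi) directly. In exchange, (iv) is integrated into your cycle at no extra cost, where the paper needs a separate proposition. Your unproved auxiliary claims are all true: the initial topology $\tau_\varphi$ inherits SIN and second countability immediately, and inherits $\aleph_0$-boundedness after a small extra argument (pass to the subgroup $\varphi(G)\subseteq H$, which is again $\aleph_0$-bounded, and lift countable transversals through $\varphi$); the Hausdorff reduction and the $N$-saturation of $\tau'$-open sets in (iv)$\Rightarrow$(vi) are fine as you justify them. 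Overall, the paper's modular organization yields reusable standalone statements and keeps most of the theorem independent of Doucha's result, while yours trades that modularity for a uniform topology-comparison framework and a single cycle.
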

Except for equivalence \ref{it:universal} in \Cref{th:main2} (which is also \Cref{prop:Universal}), all results are taken from the master's thesis of the first named author under supervision of the second named author.

\section{Characterizations of invariant automatic continuity}
\subsection{Preliminaries on topological groups and invariant automatic continuity}
A \textit{topological group} $G$ is a group with a topology such that the operations of multiplication and taking inverses are continuous. We do not require the group topology to be Hausdorff. This is the case precisely when the singleton $\sete{1_G}$ of the neutral element is closed; $\cl{\sete{1_G}}$ is always a normal subgroup. Topological groups are always regular \cite[Theorem 1.3.14]{ArhangelskiiTkachenko}, i.e.\ if $G$ is a topological group, then for every $g\in G$ and for every open neighborhood $U$ of $g$ there exists an open neighborhood $V$ of $g$ such that $\overline{V}\subseteq U$.

When using the terminology \textit{homomorphism} we will always mean algebraic homomorphism, i.e.\ without continuity imposed. Since the left and right translation maps are homeomorphisms, it follows that a homomorphism between topological groups is continuous if and only if it is continuous at the neutral element. Bijective bicontinuous homomorphisms are called \textit{topological isomorphisms}. A \textit{topological embedding} is a map $\varphi: G \to H$ between topological groups such that its surjective restriction $\varphi: G \to \varphi(G)$ is a topological isomorphism. 

The following lemma follows from the continuity of the $k$-ary multiplication map. 
\begin{Lemma}\label{Lemma}
Let $G$ be a topological group and $U\subseteq G$ an open neighborhood of $1_G$. Then for any $k \in \mathbb{N} \setminus \sete{0}$ there exists an open neighborhood $V \subseteq G$ of $1_G$ such that $1_G\in V\subseteq V^k\subseteq U$. 
\end{Lemma}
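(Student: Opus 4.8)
The plan is to invoke the continuity of the $k$-ary multiplication map $m_k\colon G^k\to G$, $(g_1,\dots,g_k)\mapsto g_1\cdots g_k$, at the point $(1_G,\dots,1_G)$. Since $m_k(1_G,\dots,1_G)=1_G\in U$ and $U$ is open, continuity supplies an open neighborhood $W\subseteq G^k$ of $(1_G,\dots,1_G)$ with $m_k(W)\subseteq U$. By the definition of the product topology, $W$ contains a basic open box $W_1\times\cdots\times W_k$ in which each $W_i$ is an open neighborhood of $1_G$ in $G$.

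Next I would set $V:=\bigcap_{i=1}^{k}W_i$, which is again an open neighborhood of $1_G$, being a finite intersection of such. Then $V\subseteq W_i$ for every $i$, so for any $v_1,\dots,v_k\in V$ we have $(v_1,\dots,v_k)\in W_1\times\cdots\times W_k\subseteq W$, and therefore $v_1\cdots v_k=m_k(v_1,\dots,v_k)\in U$. This yields $V^k\subseteq U$. For the remaining inclusion $V\subseteq V^k$, I would simply use $1_G\in V$: every $v\in V$ can be written as $v=v\cdot 1_G\cdots 1_G\in V^k$. Combining the two gives $1_G\in V\subseteq V^k\subseteq U$, as required.

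I expect no real obstacle here; the statement is elementary and the only two points deserving a line of care are the passage from the arbitrary neighborhood $W\subseteq G^k$ to a product box (which is exactly the definition of the product topology) and the observation that $1_G\in V$ is what forces $V\subseteq V^k$. If one wished to avoid naming $m_k$ explicitly, an induction on $k$ using only the binary multiplication map works equally well: the case $k=1$ is trivial with $V=U$, and in the inductive step one chooses, by continuity of multiplication at $(1_G,1_G)$, an open neighborhood $V'$ of $1_G$ with $V'\cdot V'\subseteq U$, applies the induction hypothesis to obtain $V''$ with $1_G\in V''\subseteq (V'')^{k}\subseteq V'$, and takes $V:=V'\cap V''$, so that $V^{k+1}\subseteq V'\cdot V'\subseteq U$ while $1_G\in V$ again gives $V\subseteq V^{k+1}$.
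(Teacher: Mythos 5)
Your proof is correct and follows exactly the route the paper indicates: the paper offers no written proof beyond the remark that the lemma ``follows from the continuity of the $k$-ary multiplication map,'' and your argument is precisely that remark carried out in detail (including the necessary observation that $1_G\in V$ forces $V\subseteq V^k$). No gaps.
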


The basic theorem for metrizability of topological groups is by Birkhoff and Kakutani, see \cite{Birkhoff} and \cite{Kakutani}. 
\begin{Theorem}[Birkhoff-Kakutani]
\label{th:BirkhoffKakutani}
A topological group is metrizable if and only if it is Hausdorff and first countable. If this is the case, then it admits a compatible left-invariant metric.
\end{Theorem}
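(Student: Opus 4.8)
The forward implication is immediate: a metric space is Hausdorff, and the balls $B(1_G, 1/n)$ form a countable neighborhood basis at the identity, so a metrizable group is Hausdorff and first countable. The content lies in the converse, so assume $G$ is Hausdorff and first countable; the plan is to construct an explicit compatible left-invariant metric by the Kakutani chain method.

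First I would use first countability together with \Cref{Lemma} to extract a decreasing sequence $(V_n)_{n\ge 1}$ of open symmetric neighborhoods of $1_G$ that forms a neighborhood basis at $1_G$ and satisfies $V_{n+1}^3\subseteq V_n$ for all $n$. The cubing condition is supplied by \Cref{Lemma} applied with $k=3$; symmetry ($V_n = V_n^{-1}$) is arranged by intersecting each set with its inverse, and intersecting at each stage with a member of a fixed countable basis guarantees the $V_n$ remain a neighborhood basis. Setting $V_0 = G$, I then define an auxiliary size function $\rho\colon G\to[0,1]$ by $\rho(x)=2^{-n}$ when $x\in V_n\setminus V_{n+1}$ (for $n\ge 0$) and $\rho(x)=0$ when $x\in\bigcap_n V_n$; this gives $\rho(x)=1$ on $G\setminus V_1$ and, by symmetry of the $V_n$, satisfies $\rho(x)=\rho(x^{-1})$.

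The function $\rho$ need not obey the triangle inequality, so I would regularize it by passing to chains:
$$d(x,y)=\inf\left\{\sum_{i=1}^{k}\rho\bigl(x_{i-1}^{-1}x_i\bigr) : k\ge 1,\ x_0=x,\ x_k=y\right\},$$
the infimum running over all finite chains from $x$ to $y$. This $d$ automatically satisfies the triangle inequality and symmetry, and it is left-invariant, since each factor $\rho(x_{i-1}^{-1}x_i)$ depends only on the difference $x_{i-1}^{-1}x_i$, which is unchanged under $x_i\mapsto gx_i$. The crux of the argument, and the step I expect to be the main obstacle, is the chain estimate
$$\tfrac12\rho(x^{-1}y)\le d(x,y)\le \rho(x^{-1}y).$$
The right-hand inequality is trivial (take the one-step chain), while the left-hand inequality is proven by induction on the chain length, the inductive step invoking the condition $V_{n+1}^3\subseteq V_n$ to bound how fast the size can shrink when a chain is routed through intermediate points.

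Granted this estimate, the conclusion follows quickly. Left-invariance has already been observed. That $d$ is a genuine metric rather than merely a pseudometric uses the Hausdorff hypothesis: if $x\ne y$ then $x^{-1}y\ne 1_G$, and since the $V_n$ form a neighborhood basis in a Hausdorff group we have $\bigcap_n V_n=\{1_G\}$, so $x^{-1}y\notin V_n$ for some $n$ and hence $d(x,y)\ge\tfrac12\rho(x^{-1}y)>0$. Finally, $d$ induces $\tau$: the two-sided estimate, together with the fact that the $V_n$ form a neighborhood basis at $1_G$, shows that every $V_n$ contains a $d$-ball about $1_G$ and every $d$-ball about $1_G$ contains some $V_n$, so the $d$-topology and $\tau$ have the same neighborhood filter at $1_G$; by left-invariance of both this identity of neighborhood filters propagates to every point, giving $d$ compatible with $\tau$.
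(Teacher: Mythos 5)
Your proof is correct, and it is the classical chain-construction argument of Birkhoff and Kakutani: the reduction to symmetric neighborhoods with $V_{n+1}^3\subseteq V_n$, the size function $\rho$, the chain infimum $d$, and the two-sided estimate $\tfrac12\rho(x^{-1}y)\le d(x,y)\le\rho(x^{-1}y)$ (whose inductive proof, which you only sketch, is indeed where the cube condition $V_{n+1}^3\subseteq V_n$ is consumed) are exactly the standard ingredients, and your deductions of left-invariance, positivity from Hausdorffness via $\bigcap_n V_n=\{1_G\}$, and compatibility from the interleaving of $d$-balls with the $V_n$ are all sound. Note, however, that the paper does not prove this theorem at all; it is quoted as a classical result with references to Birkhoff and Kakutani, so there is no in-paper argument to compare against. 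It is worth pointing out that where the paper \emph{does} carry out a metrization-type construction, namely in the proof of \Cref{th:GuranSIN}, it follows the different (Urysohn-style) method of \cite{ArhangelskiiTkachenko}: dyadic families $V(r)$, the function $f(g)=\inf\{r : g\in V(r)\}$, and the prenorm $\ell(g)=\sup_h|f(gh)-f(h)|$. That method is better suited to the paper's purpose, since it works for non-first-countable groups (producing quotient pseudometrics used to embed $\aleph_0$-bounded groups into products) and makes bi-invariance transparent in the SIN setting, whereas your chain method is the more direct route to the left-invariant compatible metric asserted in the theorem itself. Either approach legitimately proves the stated result.
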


Given a homomorphism $\varphi: G \to H$ between topological groups one can obtain potentially new group topologies: the initial and the final topology. 
If $N\trianglelefteq G$, then the final topology on $G/N$ for the natural projection homomorphism $\pi: G \to G/N: g \mapsto gN$ is called the \textit{quotient topology}. With respect to this topology, $\pi$ is both continuous (by definition) and open. The quotient topology is Hausdorff if and only if $N$ is closed in $G$. 

In order to show that we may assume the target groups in the definition of (IAC) to be Hausdorff, we will use the following fact.
\begin{Proposition}
\label{th:continuousTargetQuotientByClosureOfIdentity}
Let $\varphi: G \to H$ be a group homomorphism between topological groups. Consider $\widetilde{\varphi}: G \to H/\cl{\sete{1_H}}: g \mapsto \varphi(g)\cl{\sete{1_H}}$. Then $\varphi$ is continuous if and only if $\widetilde{\varphi}$ is so.
\end{Proposition}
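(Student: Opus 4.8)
The plan is to set $N = \cl{\sete{1_H}}$, which by the remarks above is a closed normal subgroup of $H$, and to let $\pi \colon H \to H/N$ denote the canonical projection onto the quotient group equipped with the quotient topology. Since $\widetilde\varphi = \pi \circ \varphi$ and $\pi$ is continuous by definition of the quotient topology, the forward implication is immediate: a composition of continuous maps is continuous. So the content of the statement lies in the converse, for which I would exploit that $\pi$ is not only continuous but also \emph{open}.

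The key observation I would isolate first is that every open subset $U \subseteq H$ is saturated with respect to $N$, i.e.\ $UN = U$, equivalently $U$ is a union of cosets of $N$. This rests on the elementary fact that $N$ is contained in every open neighborhood of $1_H$: if $V$ is such a neighborhood, then so is $V^{-1}$, and for any $x \in N = \cl{\sete{1_H}}$ the set $xV^{-1}$ is an open neighborhood of $x$, which must then contain $1_H$ because $x$ lies in the closure of $\sete{1_H}$; hence $x^{-1} \in V^{-1}$, i.e.\ $x \in V$. Applying this to the neighborhood $u^{-1}U$ of $1_H$ for each $u \in U$ gives $uN \subseteq U$, so $UN = U$. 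Consequently $U = \pi^{-1}(\pi(U))$ for every open $U \subseteq H$.

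With this in hand the converse is short. Assuming $\widetilde\varphi$ is continuous, I would show that $\varphi^{-1}(U)$ is open for every open $U \subseteq H$. Using saturation,
\[
\varphi^{-1}(U) = \varphi^{-1}\bigl(\pi^{-1}(\pi(U))\bigr) = (\pi\circ\varphi)^{-1}(\pi(U)) = \widetilde\varphi^{-1}(\pi(U)).
\]
Here $\pi(U)$ is open because $\pi$ is an open map, and therefore $\widetilde\varphi^{-1}(\pi(U))$ is open by continuity of $\widetilde\varphi$; thus $\varphi^{-1}(U)$ is open and $\varphi$ is continuous.

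The only genuine obstacle is the saturation lemma of the second paragraph: once one knows that open sets in $H$ are $N$-invariant, the identity $U = \pi^{-1}(\pi(U))$ together with the openness of $\pi$ does all the work, and everything else is formal manipulation of preimages.
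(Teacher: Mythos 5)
Your proof is correct, but it takes a genuinely different route from the paper's. You isolate the sharper structural fact that $N = \cl{\sete{1_H}}$ is contained in \emph{every} open neighborhood of $1_H$ (your argument for this, via the neighborhood $xV^{-1}$ of a point $x \in N$, is valid), so that every open $U \subseteq H$ is saturated, $U = UN = \pi^{-1}(\pi(U))$; continuity of $\varphi$ then falls out of a purely formal preimage computation, with no need to work at the identity. The paper instead never proves saturation: it argues at $1_G$, takes an open $V \ni 1_H$, uses its Lemma \ref{Lemma} to find $W_1$ with $W_1^2 \subseteq V$ and regularity of topological groups to find $W_2$ with $\cl{W_2} \subseteq W_1$, and then absorbs the coset via the trivial inclusion $N \subseteq \cl{W_2}$ (valid merely because $1_H \in W_2$), obtaining $\varphi(U) \subseteq \pi^{-1}(\pi(W_2)) = W_2 N \subseteq W_2\cl{W_2} \subseteq W_1^2 \subseteq V$. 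What each approach buys: yours is more self-contained and conceptually cleaner---it exhibits $\pi$ as the Kolmogorov quotient, inducing a bijection between the open sets of $H$ and those of $H/N$, so the equivalence of continuity is essentially tautological once the saturation lemma is in hand; the paper's is softer, reusing only machinery it has already set up (the multiplication lemma and regularity) and avoiding any separate lemma, at the cost of a slightly longer chain of inclusions and an argument localized at the identity. Both uses of openness of $\pi$ are legitimate; in your version one could even bypass it, since for saturated sets $\pi(U)$ is open by the very definition of the quotient topology.
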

\begin{proof}
Denote the natural projection map $H \to H/\cl{\sete{1_H}}$ by $\pi$. Then $\widetilde{\varphi} = \pi \circ \varphi$, so that $\widetilde{\varphi}$ is continuous if $\varphi$ is so (since $\pi$ is continuous by definition of the quotient topology).

Conversely, assume that $\widetilde{\varphi}$ is continuous. It suffices to show that $\varphi$ is continuous at $1_G$. Let $V \subseteq H$ be an open neighborhood of $1_H = \varphi(1_G)$. Use \Cref{Lemma} to find an open neighborhood $W_1 \subseteq H$ of $1_H$ such that $W_1 \subseteq W_1^2 \subseteq V$. As any topological group is regular, we can find an open neighborhood $W_2$ of $1_H$ such that $\cl{W_2} \subseteq W_1$. So
\begin{equation}
\label{eq:closureSubsetSquareSubset}
1_H \in W_2 \subseteq \cl{W_2} \subseteq W_1 \subseteq W_1^2 \subseteq V.
\end{equation} Since $\pi$ is open, $\pi(W_2) \subseteq H/\cl{\sete{1_H}}$ is an open neighborhood of $\pi(1_H) = \widetilde{\varphi}(1_G)$. By continuity of $\widetilde{\varphi}$ at $1_G$ there exists an open neighborhood $U \subseteq G$ of $1_G$ such that $\widetilde{\varphi}(U) \subseteq \pi(W_2)$, i.e.\ $\pi(\varphi(U)) \subseteq \pi(W_2)$ and $\varphi(U) \subseteq \pi^{-1}(\pi(W_2))$. Note that $\pi^{-1}(\pi(W_2)) = W_2 \cl{\sete{1_H}}$. Using that $\cl{\sete{1_H}} \subseteq \cl{W_2}$ since $1_H \in W_2$, we obtain that
\[\varphi(U) \subseteq W_2\cl{W_2} \subseteq W_1^2 \subseteq V\]
by \eqref{eq:closureSubsetSquareSubset}. We conclude that $\varphi$ is continuous at $1_G$ and thus continuous everywhere.
\end{proof}

A subset $A \subseteq G$ is called \textit{countably syndetic} if countably many translates of $A$ cover $G$, that is, if there exists a countable subset $T \subseteq G$ such that
\[G = TA = \setp{ta}{t \in T, a \in A}.\] 
If every non-empty open subset of $G$ is countably syndetic, then $G$ is called \textit{$\aleph_0$-bounded} or \textit{$\omega$-narrow}. One can show that every separable group is $\aleph_0$-bounded and that $\aleph_0$-boundedness passes to subgroups, products and quotients \cite[Propositions 3.4.2 and 3.4.3, Theorem 3.4.4 and Corollary 3.4.8]{ArhangelskiiTkachenko}. 
One can characterize Hausdorff $\aleph_0$-bounded groups as follows, see e.g. \cite[Theorem 3.4.23]{ArhangelskiiTkachenko} and its proof. 
\begin{Theorem}[Guran]
\label{th:Guran}
A Hausdorff group is $\aleph_0$-bounded if and only if it is topologically isomorphic to a subgroup of a product of second countable groups. In addition, we can take these second countable groups to be metrizable.
\end{Theorem}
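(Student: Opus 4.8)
The plan is to prove the two implications separately; the forward direction will rest on producing, for each neighborhood of the identity, a continuous homomorphism into the isometry group of a separable metric space. For the easy direction, suppose $G$ embeds topologically into a product $\prod_i K_i$ of second countable groups. Each $K_i$ is separable, hence $\aleph_0$-bounded, and by the stability of $\aleph_0$-boundedness under products and subgroups recalled above, the subgroup $G$ is $\aleph_0$-bounded as well. Since second countable groups are in particular metrizable, this also covers the addendum, so only the converse requires work.

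For the converse, let $G$ be Hausdorff and $\aleph_0$-bounded. I would reduce everything to the following claim: for every open neighborhood $U$ of $1_G$ there is a continuous homomorphism $\rho_U \colon G \to K_U$ into a second countable metrizable group together with an open neighborhood $W_U$ of the identity of $K_U$ such that $\rho_U^{-1}(W_U) \subseteq U$. Granting the claim, the diagonal map $\Phi = \prod_U \rho_U \colon G \to \prod_U K_U$ is continuous; it is injective because $G$ is Hausdorff (if $g \neq 1_G$, pick $U$ with $g \notin U$, so $\rho_U(g) \notin W_U$ and hence $\rho_U(g) \neq 1_{K_U}$); and the inclusions $\rho_U^{-1}(W_U) \subseteq U$ show that the initial topology induced by $\Phi$ agrees with the given one at $1_G$, whence $\Phi$ is a topological embedding by homogeneity.

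To prove the claim I would build the homomorphism through a pseudometric. Using \Cref{Lemma} and regularity, choose symmetric open neighborhoods $V_0 \supseteq V_1 \supseteq \cdots$ of $1_G$ with $V_0 \subseteq U$ and $V_{n+1}^3 \subseteq V_n$, and feed them into the Birkhoff--Kakutani construction (\Cref{th:BirkhoffKakutani}) to obtain a continuous left-invariant pseudometric $d$ on $G$ whose open balls about $1_G$ are cofinal with the $V_n$; in particular $\{g : d(g,1_G) < \varepsilon_0\} \subseteq U$ for a suitable $\varepsilon_0 > 0$. Let $N = \{g : d(g,1_G) = 0\}$, let $\bar d$ be the induced metric on the set $X$ of left cosets $gN$, and let $\widehat X$ be the completion of $(X,\bar d)$, a Polish space. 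Left translations $gN \mapsto (hg)N$ are surjective $\bar d$-isometries of $X$ and extend to $\widehat X$, so they define a homomorphism $\rho_U \colon G \to \mathrm{Isom}(\widehat X)$ into the isometry group equipped with the topology of pointwise convergence. Continuity of $\rho_U$ follows from left-invariance and continuity of $d$, and with $x_0 = 1_G N$ the neighborhood $W_U = \{T : \bar d(T x_0, x_0) < \varepsilon_0\}$ satisfies $\rho_U^{-1}(W_U) = \{g : d(g,1_G) < \varepsilon_0\} \subseteq U$.

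The decisive and essentially unique use of $\aleph_0$-boundedness is to guarantee that $X$ is separable, which is what makes $\mathrm{Isom}(\widehat X)$ a second countable metrizable group (an isometry of $\widehat X$ is determined by its values on a countable dense set, so pointwise convergence embeds $\mathrm{Isom}(\widehat X)$ into a countable power of $\widehat X$). Indeed, for each $r > 0$ the ball $B_r = \{g : d(g,1_G) < r\}$ is open, hence countably syndetic, so $G = T_r B_r$ for some countable $T_r$; left-invariance then makes $\{t N : t \in T_r\}$ a countable $r$-net in $X$, and the union over $r = 1/k$ is a countable dense subset. I expect the main obstacle to be conceptual rather than computational: the naive attempt to realize $K_U$ as a metrizable quotient $G/N$ fails, since $N$ need not be normal and $X$ carries no group structure; the right move is to pass instead to the isometry group of the coset space, and the technical heart is to verify that $\mathrm{Isom}(\widehat X)$ is a genuine second countable topological group, that $\rho_U$ is continuous, and that separability is delivered by the net argument above.
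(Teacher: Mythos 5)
Your proposal is correct, but it takes a genuinely different route from the one the paper relies on. The paper does not prove \Cref{th:Guran} from scratch: it cites Arhangel'skii--Tkachenko and reproduces the relevant construction inside the proof of the SIN variant, \Cref{th:GuranSIN}. There, for each neighborhood $U$ of $1_G$, one builds the dyadic system $V(r)$, the function $f(g) = \inf\setp{r \in \mathbb{D}_{>0}}{g \in V(r)}$, and crucially the symmetrized length $\ell(g) = \sup_{h \in G}\abs{f(gh)-f(h)}$; the whole point of passing from $f$ to $\ell$ is that the kernel $Z = \setp{g \in G}{\ell(g) = 0}$ becomes a \emph{normal} subgroup, so that the second countable metrizable target can be taken to be the quotient group $G/Z$ with the left-invariant metric $d(gZ,hZ) = \ell(g^{-1}h)$. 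You instead keep the plain Birkhoff--Kakutani left-invariant pseudometric, accept that its kernel $N$ is only a subgroup, and replace the quotient group by the left coset metric space $X$, acting on the completion $\widehat{X}$ and landing in $\mathrm{Isom}(\widehat{X})$ with the topology of pointwise convergence; $\aleph_0$-boundedness enters, exactly as in the paper, to produce countable nets and hence separability and second countability of the target. Both arguments are sound: your continuity check for $\rho_U$ works because $d(g'h, gh) = d(1_G, h^{-1}(g')^{-1}g h)$ and conjugation by a fixed $h$ is continuous, and the isometry-group facts you defer (topological group structure, determination of isometries on a dense set) are standard. What each approach buys: yours avoids the $\ell$-symmetrization and all normality considerations, at the price of importing the machinery of isometry groups of Polish spaces; the paper's construction stays internal to quotients of $G$, which is precisely what makes it adaptable to the SIN setting of \Cref{th:GuranSIN} --- there conjugacy-invariance of the $U_n$ makes the quotient metric bi-invariant and the quotient SIN, whereas $\mathrm{Isom}(\widehat{X})$ has no reason to be SIN, so your route would not yield the SIN refinement the paper actually needs. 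One small slip: your remark in the easy direction that second countable groups are ``in particular metrizable'' requires Hausdorffness (which the paper does not assume of group topologies); it is harmless here, since the addendum concerns the forward direction and your construction produces metrizable factors anyway.
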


We say that a subset $A \subseteq G$ is \textit{conjugacy-invariant} if $A = gAg^{-1} = \setp{gag^{-1}}{a \in A}$ for all $g \in G$.
A topological group $G$ is called \textit{SIN} (short for Small Invariant Neighborhoods) or \textit{balanced} if it admits a local basis at $1_G$ consisting of conjugacy-invariant open neighborhoods. As the natural projection homomorphism is open, SIN passes to quotients. Furthermore, being SIN is preserved under the initial topology. In particular, products and subgroups of SIN groups remain SIN. Every compact group is SIN \cite[Corollary 1.8.7]{ArhangelskiiTkachenko}.

There is a SIN version of the Birkhoff-Kakutani Theorem, see \cite[Corollary 3.3.14]{ArhangelskiiTkachenko}.
\begin{Theorem}
\label{th:BirkhoffKakutaniSIN}
A metrizable group is SIN if and only if it admits a compatible bi-invariant metric.
\end{Theorem}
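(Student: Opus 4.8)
The statement is an equivalence, so I would prove the two implications separately, the forward (bi-invariant metric $\Rightarrow$ SIN) direction being routine and the converse being the substance.

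For the routine direction, suppose $G$ carries a compatible bi-invariant metric $d$. The balls $B_\varepsilon = \sete{g \in G : d(1_G, g) < \varepsilon}$, $\varepsilon > 0$, form a local basis at $1_G$, so it suffices to show that each is conjugacy-invariant. For $g, h \in G$, bi-invariance gives $d(1_G, hgh^{-1}) = d(h^{-1}, gh^{-1}) = d(1_G, g)$, the first equality by left-translating both arguments by $h^{-1}$ and the second by right-translating by $h$. Hence $g \in B_\varepsilon \iff hgh^{-1} \in B_\varepsilon$, i.e.\ $hB_\varepsilon h^{-1} = B_\varepsilon$, so $G$ is SIN.

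For the converse, suppose $G$ is metrizable and SIN; by \Cref{th:BirkhoffKakutani} it is Hausdorff and first countable. The plan is to run the Birkhoff-Kakutani construction but feeding it conjugacy-invariant neighborhoods, so that the resulting left-invariant metric is automatically right-invariant as well. First I would extract a decreasing local basis $(V_n)_{n \ge 0}$ at $1_G$ of open neighborhoods that are symmetric ($V_n = V_n^{-1}$), conjugacy-invariant, and satisfy $V_0 = G$ and $V_{n+1}^3 \subseteq V_n$: starting from a countable local basis (first countability), I find inside each member a conjugacy-invariant neighborhood (SIN) lying inside the cube-root neighborhood provided by \Cref{Lemma}, then intersect with its own inverse to symmetrize; symmetrizing and cube-nesting both preserve conjugacy-invariance. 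Since $G$ is Hausdorff, $\bigcap_n V_n = \sete{1_G}$. With the $(V_n)$ in hand I define a gauge $f : G \to [0,1]$ by $f(1_G) = 0$ and $f(g) = 2^{-n}$ for $g \ne 1_G$, where $n = \sup\sete{m \ge 0 : g \in V_m}$ (finite because $\bigcap_m V_m = \sete{1_G}$), and set
\[ d(x,y) = \inf \Bigl\{ \sum_{i=1}^{k} f(x_{i-1}^{-1} x_i) \;:\; k \ge 1,\ x_0 = x,\ x_k = y \Bigr\}. \]
Since $(g x_{i-1})^{-1}(g x_i) = x_{i-1}^{-1} x_i$, the infimum is unchanged under left-translation, so $d$ is left-invariant for free. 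The point of the conjugacy-invariant choice is right-invariance: each $V_n$ being conjugacy-invariant gives $f(g^{-1} z g) = f(z)$ for all $g, z$, and because $(x_{i-1} g)^{-1}(x_i g) = g^{-1}(x_{i-1}^{-1} x_i) g$, right-translation by $g$ is a length-preserving bijection between chains joining $x,y$ and chains joining $xg, yg$. Hence $d(xg, yg) = d(x, y)$, and $d$ is bi-invariant.

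It then remains to show that $d$ is a metric compatible with the topology of $G$, and this is where I expect the only real work. The crux is the classical Birkhoff-Kakutani estimate
\[ \tfrac{1}{2} f(g) \le d(1_G, g) \le f(g) \qquad (g \in G), \]
in which the upper bound is the one-term chain, while the lower bound is the content, proved by induction on chain length via the standard lemma $f(x_0^{-1} x_k) \le 2 \sum_{i=1}^k f(x_{i-1}^{-1} x_i)$, which in turn uses exactly the nesting $V_{n+1}^3 \subseteq V_n$. Granting the estimate, $d(1_G, g) = 0$ forces $g \in \bigcap_n V_n = \sete{1_G}$, so $d$ separates points, and the $d$-balls around $1_G$ and the neighborhoods $V_n$ are mutually cofinal, so $d$ induces the given topology. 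Everything other than this lower estimate is the bookkeeping already carried out above, so I regard the induction establishing $\tfrac12 f(g) \le d(1_G, g)$ as the main obstacle.
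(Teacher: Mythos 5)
Your proof is correct, but note that the paper does not prove this statement at all: it quotes it directly from \cite[Corollary 3.3.14]{ArhangelskiiTkachenko}, so there is no internal proof to compare against. What you have written is essentially the standard proof of that cited corollary: run the Birkhoff--Kakutani metrization construction on a nested sequence of symmetric \emph{conjugacy-invariant} neighborhoods (available by SIN; the paper's \Cref{LemmaSIN} packages exactly the symmetrize-and-nest step you carry out by hand), so that the gauge $f$ satisfies $f(g^{-1}zg) = f(z)$ and the resulting left-invariant chain metric is automatically right-invariant. This is also precisely the device the paper itself uses elsewhere, namely in the proof of \Cref{th:GuranSIN}, where conjugacy-invariance of the neighborhoods $V(r)$ is what makes the constructed pseudometric bi-invariant; so your argument is very much in the paper's spirit even though the paper outsources this particular statement. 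Two bookkeeping points you should make explicit: symmetry of $d$ (that $d(x,y)=d(y,x)$) follows from $V_n = V_n^{-1}$, since reversing a chain replaces each increment $x_{i-1}^{-1}x_i$ by its inverse, and this is needed for $d$ to be a metric; and in building the $V_n$ you should require $V_{n+1}^3 \subseteq V_n \cap U_{n+1}$ (intersecting with the $n$-th member of the given countable basis) so that the $V_n$ really form a local basis, which your phrasing leaves slightly loose. The one step you defer, the chain estimate $f(x_0^{-1}x_k) \le 2\sum_{i} f(x_{i-1}^{-1}x_i)$ under the nesting $V_{n+1}^3 \subseteq V_n$, is indeed the classical crux, and your description of how it is proved (induction on chain length, splitting at the point where the partial sum passes half the total) is accurate.
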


One can adapt the proof of Guran's Theorem \cite{Guran} in \cite{ArhangelskiiTkachenko} in the context of SIN groups. To this end we need a SIN version of \Cref{Lemma}. Recall that a subset $A \subseteq G$ of a group $G$ is called \textit{symmetric} if $A = A^{-1} = \setp{a^{-1}}{a \in A}$.
\begin{Lemma}
\label{LemmaSIN}
Let $G$ be a SIN group and $U \subseteq G$ an open neighborhood of $1_G$. Then for any $k \in \mathbb{N} \setminus \sete{0}$ there exists a symmetric conjugacy-invariant open neighborhood $V \subseteq G$ of $1_G$ such that $1_G \in V \subseteq V^k \subseteq U$.
\end{Lemma}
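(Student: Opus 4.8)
The plan is to combine the two properties we need---being a subset of $U$ with bounded $k$-th power---by intersecting the output of the ordinary \Cref{Lemma} with suitable conjugacy-invariant SIN-neighborhoods, and then symmetrize. First I would apply \Cref{Lemma} to obtain an open neighborhood $W$ of $1_G$ with $1_G \in W \subseteq W^k \subseteq U$. Since $G$ is SIN, there is a local basis at $1_G$ of conjugacy-invariant open neighborhoods, so I can choose a conjugacy-invariant open neighborhood $W'$ with $1_G \in W' \subseteq W$. The key observation is that conjugacy-invariance is preserved by taking powers: if $W'$ is conjugacy-invariant then so is $(W')^k$, because $g(W')^k g^{-1} = (gW'g^{-1})^k = (W')^k$. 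Hence $W'$ already satisfies $1_G \in W' \subseteq (W')^k \subseteq W^k \subseteq U$ and is conjugacy-invariant.

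It remains to make the neighborhood symmetric without destroying the other two properties. I would set $V = W' \cap (W')^{-1}$. This is open (inversion is a homeomorphism, so $(W')^{-1}$ is open), contains $1_G$, and is symmetric by construction since $V^{-1} = (W')^{-1} \cap W' = V$. Conjugacy-invariance is preserved under inversion---$(gAg^{-1})^{-1} = gA^{-1}g^{-1}$, so if $W'$ is conjugacy-invariant then so is $(W')^{-1}$---and intersections of conjugacy-invariant sets are conjugacy-invariant, so $V$ is conjugacy-invariant. Finally $V \subseteq W'$ gives $V^k \subseteq (W')^k \subseteq U$, and $1_G \in V \subseteq V^k$ because $1_G \in V$.

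The argument is essentially bookkeeping, so I do not expect a genuine obstacle; the one point requiring a little care is the order in which the three properties (conjugacy-invariance, the bounded-power containment, and symmetry) are imposed, since each operation must not spoil the properties already arranged. The chosen order---first shrink inside $U$ via \Cref{Lemma}, then shrink to a conjugacy-invariant set (which automatically keeps the power bound, as powers of conjugacy-invariant sets are conjugacy-invariant), and only then symmetrize by intersecting with the inverse---ensures that symmetrization, the last step, preserves both conjugacy-invariance and the containment $V^k \subseteq U$ because it only shrinks the set. An alternative would be to symmetrize first and then invoke a SIN-neighborhood, but then one would have to re-verify the power bound, so the stated order is the cleanest.
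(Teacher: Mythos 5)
Your proof is correct, and it is exactly the routine adaptation the paper has in mind: the paper states \Cref{LemmaSIN} without proof as the SIN version of \Cref{Lemma}, and your argument (shrink inside $U$ via \Cref{Lemma}, pass to a conjugacy-invariant SIN-basis neighborhood, then symmetrize by intersecting with the inverse, noting that powers, inverses and intersections preserve conjugacy-invariance) is the natural way to carry that out.
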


\begin{Theorem}
\label{th:GuranSIN}
A Hausdorff group is $\aleph_0$-bounded and SIN if and only if it is topologically isomorphic to a subgroup of a product of second countable SIN groups. In addition, we can take these second countable SIN groups to be metrizable.
\end{Theorem}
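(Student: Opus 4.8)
The plan is to mimic the proof of Guran's Theorem (\Cref{th:Guran}) from \cite{ArhangelskiiTkachenko}, upgrading each second countable factor to a SIN group by replacing the symmetric neighborhoods used there with the symmetric \emph{conjugacy-invariant} ones furnished by \Cref{LemmaSIN}. The backward implication requires no new work: a second countable group is separable, hence $\aleph_0$-bounded, and both $\aleph_0$-boundedness and the SIN property are inherited by products and by subgroups, as is the Hausdorff property. Thus any subgroup of a product of (metrizable) second countable SIN groups is automatically a Hausdorff $\aleph_0$-bounded SIN group, and only the forward implication needs an argument.

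For the forward implication, let $G$ be Hausdorff, $\aleph_0$-bounded and SIN, and fix an open neighborhood $U$ of $1_G$. Applying \Cref{LemmaSIN} recursively (with $k=3$) I would produce a sequence $(V_n)_{n \ge 0}$ of symmetric conjugacy-invariant open neighborhoods of $1_G$ with $V_0 \subseteq U$ and $V_{n+1}^3 \subseteq V_n$ for all $n$. The classical Birkhoff--Kakutani construction turns such a normal sequence into a continuous left-invariant pseudometric $d_U$ on $G$ whose balls at $1_G$ interleave with the $V_n$; in particular $\sete{g \in G : d_U(1_G,g) < 1} \subseteq V_0 \subseteq U$. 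The decisive point is that, since every $V_n$ is conjugacy-invariant, the resulting pseudometric is in fact \emph{bi-invariant}, i.e.\ $d_U(gxg^{-1},gyg^{-1}) = d_U(x,y)$ for all $g,x,y \in G$.

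Setting $N_U = \setp{g \in G}{d_U(1_G,g) = 0}$, which is a closed normal subgroup, the pseudometric $d_U$ descends to a compatible bi-invariant metric on the quotient $G/N_U$, so that $\pi_U \colon G \to G/N_U$ is a continuous homomorphism onto a metrizable group which is SIN by \Cref{th:BirkhoffKakutaniSIN}. Because $\aleph_0$-boundedness is preserved under (continuous homomorphic images, in particular) quotients, $G/N_U$ is $\aleph_0$-bounded; and a metrizable $\aleph_0$-bounded group is separable---covering $G/N_U$ by countably many translates of each ball of radius $2^{-n}$ and taking the union over $n$ of these countable sets yields a dense subset---hence second countable.

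It then remains to assemble the diagonal map $\Phi = (\pi_U)_U \colon G \to \prod_U G/N_U$, the product ranging over all open neighborhoods $U$ of $1_G$ and landing in a product of metrizable second countable SIN groups. Continuity of $\Phi$ is clear. Injectivity uses the Hausdorff property: given $g \neq 1_G$, pick $U$ with $g \notin U$; since $N_U \subseteq V_0 \subseteq U$ we get $\pi_U(g) \neq \pi_U(1_G)$. Finally $\Phi$ is a topological embedding because, for each $U$, the preimage under $\pi_U$ of the open unit ball around the identity of $G/N_U$ equals $\sete{g \in G : d_U(1_G,g) < 1} \subseteq U$, so the initial topology induced by $\Phi$ already contains arbitrarily small neighborhoods of $1_G$ and therefore coincides with the topology of $G$. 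I expect the only genuine subtlety to be the bi-invariance of $d_U$: everything rests on feeding \emph{conjugacy-invariant} normal sequences into the Birkhoff--Kakutani machine, which is precisely what makes each factor SIN, while $\aleph_0$-boundedness is exactly what makes each factor second countable.
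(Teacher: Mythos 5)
Your proposal is correct and follows essentially the same route as the paper: both adapt Guran's argument by feeding the symmetric \emph{conjugacy-invariant} neighborhoods from \Cref{LemmaSIN} into the Birkhoff--Kakutani pseudometric construction, observe that conjugacy-invariance of the normal sequence forces bi-invariance of the resulting (pseudo)metric so that each quotient factor is SIN by \Cref{th:BirkhoffKakutaniSIN}, and then assemble the diagonal embedding into the product of these second countable metrizable SIN quotients. The only cosmetic differences are that the paper delegates the metric and second-countability details to the dyadic construction in \cite{ArhangelskiiTkachenko}, whereas you make the second-countability step explicit via $\aleph_0$-boundedness of the quotient.
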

\begin{proof}
Since both being SIN and $\aleph_0$-boundedness pass to products and subgroups, we only need to show that $\aleph_0$-bounded Hausdorff SIN groups can be topologically embedded into a product of second countable metrizable SIN groups. This will be achieved using the following more general construction, see also \cite[Theorem 3.4.21]{ArhangelskiiTkachenko}.

If $\mathcal{P}$ is a property of topological groups, then a topological group $G$ is called \textit{range-$\mathcal{P}$} if for every open neighborhood $U \subseteq G$ of $1_G$ there exist a $\mathcal{P}$-group $H_U$, an open neighborhood $V_U \subseteq H_U$ of $1_{H_U}$ and a continuous homomorphism $p_U: G \to H_U$ such that $p_U^{-1}(V_U) \subseteq U$. If $G$ is also Hausdorff, then one can show that the map
\[\Delta: G \to \prod_{U} H_U: g \mapsto (p_U(g))_U,\]
where the product is over all open neighborhoods $U \subseteq G$ of $1_G$, is a topological embedding into a product of $\mathcal{P}$-groups. Therefore, it suffices to show that Hausdorff $\aleph_0$-bounded groups are range-(second countable metrizable SIN).

Now let $G$ be an $\aleph_0$-bounded Hausdorff SIN group and fix an open neighborhood $U \subseteq G$ of $1_G$. Since $G$ is SIN we can find a symmetric conjugacy-invariant open neighborhood $U_0 \subseteq G$ of $1_G$ with $U_0 \subseteq U$. Apply \Cref{LemmaSIN} to inductively obtain a sequence $(U_n)_{n \in \mathbb{N}}$ of symmetric conjugacy-invariant open neighborhoods of $1_G$ satisfying $U_{n + 1}^2 \subseteq U_n$ for all $n \in \mathbb{N}$.

Denote the set of dyadic rationals by $\mathbb{D}$. For every $r = \frac{m}{2^n} \in \mathbb{D} \cap (0, 1]$, where $m, n \in \mathbb{N}$, we define open neighborhoods $V(r)$ of $1_G$ by induction on $n$ as follows. We put $V(1) = U_0$ and having defined $V(m/2^n)$ for a certain $n \in \mathbb{N}$ and all $1 \leq m \leq 2^n$, we put $V(2^{-n - 1}) = U_{n + 1}$ and $V(\frac{2m+1}{2^{n + 1}}) = V(2^{-n-1})V(m/2^n)$ for $m \in \sete{1, \ldots, 2^n - 1}$ (note that $V(\frac{2m}{2^{n + 1}}) = V(m/2^n)$ is already defined). We also choose $V(r) = G$ for dyadic rationals $r > 1$. We use the neighborhoods $V(r)$ for $r \in \mathbb{D}_{>0}$ to construct maps
\[f: G \to [0, 1]: g \mapsto \inf \setp{r \in \mathbb{D}_{>0}}{g \in V(r)}\]
and
\[\ell: G \to \mathbb{R}_{\geq 0}: g \mapsto \sup_{h \in G} \abs{f(gh) - f(h)}.\]

One can check
\begin{itemize}
\item that
\[Z \coloneqq \setp{g \in G}{\ell(g) = 0} = \bigcap_{n \in \mathbb{N}} U_n\]
is a normal subgroup of $G$,
\item that
 \[d: Q \times Q \to \mathbb{R}_{\geq 0}: (gZ, hZ) \mapsto \ell(g^{-1}h),\]
where $Q \coloneqq G/Z$, is a well-defined left-invariant metric on $Q$ which induces a second countable group topology on $Q$,
\item and that the natural projection homomorphism $\pi: G \to Q: g \mapsto gZ$ is continuous with respect to $d$ and satisfies $\pi^{-1}(B_d(1_Q, 1)) \subseteq U$, where $B_d(1_Q, 1)$ is the open $d$-ball in $Q$ with center $1_Q$ and radius $1$,
\end{itemize}
see \cite[pp.~165-168]{ArhangelskiiTkachenko}. This shows that $G$ is range-(second countable metrizable).

It remains to show that $Q$ is SIN, for which it suffices by \Cref{th:BirkhoffKakutaniSIN} to show that $d$ is right-invariant (hence bi-invariant). Since $U_n$ is conjugacy-invariant for all $n \in \mathbb{N}$ and the product of conjugacy-invariant sets remains conjugacy-invariant, it follows from the inductive construction that $V(r)$ is conjugacy-invariant for all $r \in \mathbb{D}_{>0}$. Therefore $f$ satisfies $f(ghg^{-1}) = f(h)$ for all $g, h \in G$, which in turn implies the same for $\ell$. This readily leads to the right-invariance of $d$.
\end{proof}

Recall that a topological group is \textit{Polish} if it is separable and admits a compatible complete metric. Polish groups satisfy an open mapping theorem, see \cite[Corollary 4.3.34]{ArhangelskiiTkachenko}: Every surjective continuous homomorphism between Polish groups is open. 

\begin{Theorem}
\label{th:separableMetrisableEmbedsPolish}
Let $G$ be a separable metrizable group. Then there exists a Polish group $\compl{G}$ in which $G$ can be topologically embedded as a dense subgroup. Moreover, if $G$ is SIN, then we can also take $\compl{G}$ to be SIN.
\end{Theorem}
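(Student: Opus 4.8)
The plan is to realize $\compl{G}$ as a suitable metric (equivalently, uniform) completion of $G$, the only delicate point being the choice of uniformity so that the group operations extend continuously. The left-invariant metric furnished by the Birkhoff--Kakutani theorem (\Cref{th:BirkhoffKakutani}) is \emph{not} adequate for this, since multiplication need not be uniformly continuous for the left uniformity; instead one works with the two-sided (Raikov) uniformity, for which both multiplication and inversion are uniformly continuous. I would first dispose of the general case and then treat the SIN refinement, which is in fact the cleaner of the two.

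For the general case, fix a compatible left-invariant metric $d_L$ on $G$ (\Cref{th:BirkhoffKakutani}). Then the two-sided metric $\widehat d(x,y) \coloneqq d_L(x,y) + d_L(x^{-1},y^{-1})$ is again compatible (inversion being continuous) and metrizes the two-sided uniformity, since $d_L(x,y)$ and $d_L(x^{-1},y^{-1})$ generate the left and right uniformities respectively. Let $\compl{G}$ be its completion. The standard Raikov completion argument (see \cite{ArhangelskiiTkachenko}) shows that multiplication and inversion are uniformly continuous for this uniformity and hence extend continuously to $\compl{G}$; the group axioms then transfer from the dense subgroup $G$ by continuity, so $\compl{G}$ is a complete metrizable topological group containing $G$ as a dense subgroup. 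As $G$ is separable and dense, $\compl{G}$ is separable, and being completely metrizable it is Polish.

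Now suppose in addition that $G$ is SIN. By the SIN Birkhoff--Kakutani theorem (\Cref{th:BirkhoffKakutaniSIN}) we may choose a compatible \emph{bi-invariant} metric $d$, and I would complete $G$ with respect to $d$ directly. Here uniform continuity is immediate: bi-invariance gives $d(x^{-1},y^{-1}) = d(x,y)$, so inversion is an isometry, while $d(xy, x'y') \le d(xy, x'y) + d(x'y, x'y') = d(x,x') + d(y,y')$, so multiplication is uniformly continuous. Both operations therefore extend to the completion $\compl{G}$, which becomes a Polish topological group with $G$ dense exactly as before. Finally, the identities $\compl{d}(ax, ay) = \compl{d}(x,y)$ and $\compl{d}(xa, ya) = \compl{d}(x,y)$ hold on the dense set $G$ and hence everywhere by continuity, so the extended metric $\compl{d}$ is bi-invariant; by \Cref{th:BirkhoffKakutaniSIN} again, $\compl{G}$ is SIN.

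The main obstacle is precisely the extension of the group operations to the completion, that is, their uniform continuity. For a general metrizable group this forces one away from the naive left-invariant completion and onto the two-sided Raikov uniformity; in the SIN case the bi-invariant metric makes this transparent and simultaneously guarantees that the completed metric stays bi-invariant, which is what delivers the SIN refinement. The remaining verifications---that $\compl{G}$ is Hausdorff, that the inclusion $G \hookrightarrow \compl{G}$ is a topological embedding onto a dense subgroup, and that the group axioms pass to the closure---are routine density-and-continuity arguments.
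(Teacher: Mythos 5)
Your construction is in substance the paper's own: the paper's proof consists of citing \cite[Theorem 2.1.3]{Gao} --- which is exactly the completion of $G$ with respect to the two-sided metric $\widehat d(x,y)=d_L(x,y)+d_L(x^{-1},y^{-1})$ --- together with \Cref{th:BirkhoffKakutani}, replacing the latter by \Cref{th:BirkhoffKakutaniSIN} in the SIN case. Your SIN half is complete and correct: bi-invariance makes inversion an isometry and multiplication $1$-Lipschitz, the extended metric on the completion is bi-invariant by a density argument, and \Cref{th:BirkhoffKakutaniSIN} then yields SIN. Likewise your observation that $\widehat d$ metrizes the join of the left and right uniformities is correct.

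The general case, however, hinges on a claim that is false: multiplication is \emph{not} uniformly continuous with respect to the two-sided (Raikov) uniformity --- in fact it is uniformly continuous for that uniformity precisely when the group is SIN. To see this, suppose that for every neighborhood $U\ni 1$ there is a neighborhood $V\ni 1$ such that $x^{-1}x',\,x'x^{-1},\,y^{-1}y',\,y'y^{-1}\in V$ implies $(xy)^{-1}(x'y')\in U$; taking $x=1$, $x'=v\in V$ and $y'=y$ arbitrary gives $y^{-1}vy\in U$ for all $v\in V$ and all $y\in G$, which is the SIN condition. So for a non-SIN separable metrizable group, such as $\mathrm{GL}_2(\mathbb{R})$ or the homeomorphism group of $[0,1]$, the step fails as stated (and note that if your claim were true, your separate treatment of the SIN case would be superfluous). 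What is true, and what the Raikov completion theorem actually asserts, is weaker but sufficient: the product of two filters (or nets) that are Cauchy for the two-sided uniformity is again Cauchy. Its proof is not an abstract uniform-continuity argument; one fixes a reference point $g_0$ in a small member of one of the Cauchy filters and uses continuity of conjugation by that \emph{fixed} element, so the auxiliary neighborhood depends on the filter and not only on $U$. The repair is immediate: invoke the Raikov completion theorem (\cite[Section 3.6]{ArhangelskiiTkachenko}, or \cite[Theorem 2.1.3]{Gao}, as the paper does) as a black box; since you have shown that $\widehat d$ metrizes the two-sided uniformity, the metric completion of $(G,\widehat d)$ is this Raikov completion, hence a Polish group containing $G$ as a dense subgroup, and the rest of your argument goes through unchanged.
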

\begin{proof}
For a separable metrizable group $G$ this follows from \cite[Theorem 2.1.3]{Gao} in combination with the Birkhoff-Kakutani Theorem \ref{th:BirkhoffKakutani}. If in addition $G$ is SIN then instead of \Cref{th:BirkhoffKakutani} we use \Cref{th:BirkhoffKakutaniSIN}.
\end{proof}

Following \cite{DowerkThom} we say that a topological group $G$ has the \textit{invariant automatic continuity property}, or in short \textit{(IAC)}, if every group homomorphism $G \to H$ to any separable SIN group $H$ is continuous. We further need the adapted notion of Steinhaus groups.
A topological group $G$ is \textit{invariant Steinhaus} if there exists a $k \in \mathbb{N}$ such that $1_G \in \Int W^k$ for every symmetric conjugacy-invariant countably syndetic $W \subseteq G$.
A variant of \cite[Proposition 2]{RosendalSolecki} in the SIN setting is \cite[Proposition 8.10]{DowerkThom}:

\begin{Theorem}[Dowerk-Thom]\label{th:invSteinhaus}
Every invariant Steinhaus group has the invariant automatic continuity property. 
\end{Theorem}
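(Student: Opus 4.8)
The plan is to run the standard Steinhaus-type argument of Rosendal and Solecki, adapted to the conjugacy-invariant setting. Fix a homomorphism $\varphi \colon G \to H$ into a separable SIN group $H$, and let $k \in \mathbb{N}$ be the exponent witnessing that $G$ is invariant Steinhaus. Since left and right translations are homeomorphisms, it suffices to show that $\varphi$ is continuous at $1_G$. First I would record that, being separable, $H$ is $\aleph_0$-bounded, so every non-empty open subset of $H$ is countably syndetic. (One could also replace $H$ by its Hausdorff quotient $H/\cl{\sete{1_H}}$ via \Cref{th:continuousTargetQuotientByClosureOfIdentity}, but this is not needed below.)

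Now let $O \subseteq H$ be an arbitrary open neighborhood of $1_H$. Because $H$ is SIN, \Cref{LemmaSIN} applied with exponent $2k$ produces a symmetric conjugacy-invariant open neighborhood $V \subseteq H$ of $1_H$ with $V^{2k} \subseteq O$. The idea is to transport the countable syndeticity of $V$ through $\varphi$. Set
\[ W \coloneqq \varphi^{-1}(V^2). \]
Since $V^2$ is symmetric and conjugacy-invariant (being a product of such sets), and since $\varphi$ intertwines conjugation and inversion, $W$ is a symmetric conjugacy-invariant subset of $G$.

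The main obstacle is to verify that $W$ is countably syndetic, which is delicate precisely because $\varphi$ need not be surjective. Here I would use $\aleph_0$-boundedness of $H$ to write $H = TV$ for some countable $T \subseteq H$, discard those $t \in T$ with $tV \cap \varphi(G) = \emptyset$, and for each of the remaining $t$ choose $g_t \in G$ with $\varphi(g_t) \in tV$. Given any $g \in G$, picking $t$ with $\varphi(g) \in tV$ yields $\varphi(g_t^{-1}g) = \varphi(g_t)^{-1}\varphi(g) \in (tV)^{-1}(tV) = V^{-1}V = V^2$, where symmetry of $V$ is exactly what collapses $V^{-1}V$ to $V^2$; hence $g \in g_t W$. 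As the chosen $g_t$ form a countable set, this shows $G = \bigcup_t g_t W$, so $W$ is countably syndetic.

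Finally I would invoke the invariant Steinhaus hypothesis: since $W$ is symmetric, conjugacy-invariant and countably syndetic, $1_G \in \Int W^k$. Taking $U \coloneqq \Int W^k$, which is then an open neighborhood of $1_G$, we get
\[ \varphi(U) \subseteq \varphi(W)^k \subseteq (V^2)^k = V^{2k} \subseteq O. \]
Thus $\varphi$ maps a neighborhood of $1_G$ into $O$; as $O$ was arbitrary, $\varphi$ is continuous at $1_G$ and hence everywhere. I expect the syndeticity step for $W$ to be the only genuinely non-formal point; the remaining reductions (to $\aleph_0$-bounded targets, and the forced passage from $V$ to $V^2$) are exactly where the SIN and conjugacy-invariance hypotheses enter, so no appeal to the metric machinery of \Cref{th:GuranSIN} or \Cref{th:BirkhoffKakutaniSIN} is necessary.
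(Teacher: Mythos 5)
Your proof is correct and is exactly the standard Rosendal--Solecki Steinhaus argument transported to the SIN/conjugacy-invariant setting, which is precisely the argument behind \cite[Proposition 8.10]{DowerkThom} that the paper cites for this theorem rather than reproving it. All the key verifications check out: symmetry and conjugacy-invariance of $W = \varphi^{-1}(V^2)$ (using that $\varphi$ intertwines inversion and conjugation and that $V^2$ is invariant under conjugation by all of $H$), countable syndeticity of $W$ via $\aleph_0$-boundedness of the separable target, and the final estimate $\varphi(\Int W^k) \subseteq V^{2k} \subseteq O$ from the invariant Steinhaus hypothesis.
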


\Cref{th:invSteinhaus} was used in \cite{DowerkThom} to show that the compact connected simple Lie groups $\mathrm{PU}(n)$ and $\mathrm{SU}(n)$ have the invariant automatic continuity property when endowed with the usual norm topology. These groups are known not to have the automatic continuity property, see \cite[Example 1.5]{RosendalSurvey}. Additionally, \Cref{th:invSteinhaus} was used in \cite{DowerkThom} to show that the (non-locally compact, Polish) projective unitary group of a separable type $\mathrm{II}_1$ factor von Neumann algebra has the invariant automatic continuity property with respect to the strong operator topology. The notion of invariant automatic continuity can be used to show the uniqueness of Polish SIN topologies. Indeed, if $(G, \tau)$ is a Polish SIN group with (IAC), then the open mapping theorem for Polish groups implies that $\tau$ is the only Polish SIN topology $G$ admits. Invariant automatic continuity has also proven to be fruitful in the context of profinite groups, using again Theorem \ref{th:invSteinhaus}, see \cite{LeMaitreWesolek}.

\subsection{Characterizations of invariant automatic continuity}
We present an elementary characterization of invariant automatic continuity, which allows us to consider only homomorphisms into Polish SIN groups instead of potentially non-Hausdorff separable SIN groups. 
The following \namecref{th:IACTarget} is motivated by the paragraph below \cite[Proposition 2]{RosendalSolecki} and analogously holds for automatic continuity instead of invariant automatic continuity, i.e.\ after removing SIN from the conditions. 
\begin{Theorem}
\label{th:IACTarget}
Let $G$ be a topological group. The following are equivalent.
\begin{enumerate}[label=(\roman*)]
\item \label{it:target:IAC} The topological group $G$ has (IAC).
\item \label{it:target:IACAleph0} Every homomorphism $G \to H$ to an $\aleph_0$-bounded SIN group $H$ is continuous.
\item \label{it:target:IACPolish} Every homomorphism $G \to H$ to a Polish SIN group $H$ is continuous.
\end{enumerate}
\end{Theorem}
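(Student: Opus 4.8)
The plan is to establish the three-way equivalence as a cycle, exploiting the nesting of the three classes of target groups. Since every Polish group is separable and every separable group is $\aleph_0$-bounded, the class of Polish SIN groups is contained in the class of separable SIN groups, which in turn is contained in the class of $\aleph_0$-bounded SIN groups. Demanding that \emph{every} homomorphism into a larger class of targets be continuous trivially forces the same for any subclass, so the implications (ii)$\Rightarrow$(i)$\Rightarrow$(iii) require no work. The entire content of the \namecref{th:IACTarget} is thus concentrated in the reverse implication (iii)$\Rightarrow$(ii), and the strategy is to funnel an arbitrary homomorphism into an $\aleph_0$-bounded SIN group down to a family of homomorphisms into Polish SIN groups, where hypothesis (iii) can be applied.

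To carry this out, I would take a homomorphism $\varphi: G \to H$ with $H$ an $\aleph_0$-bounded SIN group and reduce the target in two stages. First I would dispose of the non-Hausdorff case: by \Cref{th:continuousTargetQuotientByClosureOfIdentity}, $\varphi$ is continuous if and only if the induced map into $H/\cl{\sete{1_H}}$ is, and this quotient is Hausdorff while remaining $\aleph_0$-bounded and SIN, as both properties pass to quotients. So one may assume $H$ is a Hausdorff $\aleph_0$-bounded SIN group. The SIN version of Guran's theorem, \Cref{th:GuranSIN}, then supplies a topological embedding $\iota: H \hookrightarrow \prod_{i \in I} H_i$ into a product of second countable metrizable SIN groups. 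Because $\iota$ is a topological embedding and the codomain carries the product (initial) topology, $\varphi$ is continuous if and only if each coordinate map $\psi_i \coloneqq p_i \circ \iota \circ \varphi : G \to H_i$ is continuous, where $p_i$ is the $i$-th projection.

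It then remains to treat a single second countable metrizable SIN target $H_i$. Such a group is separable and metrizable, so by \Cref{th:separableMetrisableEmbedsPolish} it embeds as a dense subgroup into a Polish group $\cl{H_i}$ via some topological embedding $j_i$; crucially, the SIN refinement of that \namecref{th:separableMetrisableEmbedsPolish} lets us take $\cl{H_i}$ to be SIN as well. By hypothesis (iii) the composition $j_i \circ \psi_i: G \to \cl{H_i}$ into this Polish SIN group is continuous, and since $j_i$ is a topological embedding, $\psi_i$ is itself continuous. Running this for every $i \in I$ yields the continuity of $\varphi$, completing (iii)$\Rightarrow$(ii) and thereby the cycle.

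The main obstacle is not the logical skeleton, which is dictated by the inclusions, but ensuring that the SIN hypothesis survives both reduction steps: a plain application of Guran's theorem and of the metrizable-to-Polish completion would produce, respectively, second countable and Polish targets with no control on invariance, and then hypothesis (iii) could not be invoked. It is precisely the SIN-enhanced statements \Cref{th:GuranSIN,th:separableMetrisableEmbedsPolish} that keep every intermediate target inside the SIN world. The only other point needing a line of care is the routine fact that, for a topological embedding, continuity of a map is equivalent to continuity of its composition with the embedding, which is what licenses both the passage to coordinates after $\iota$ and the descent along $j_i$.
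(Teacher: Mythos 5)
Your proposal is correct and follows essentially the same route as the paper's own proof: the trivial implications from the nesting of target classes, then (iii)$\Rightarrow$(ii) via the quotient by $\cl{\sete{1_H}}$ (\Cref{th:continuousTargetQuotientByClosureOfIdentity}), the SIN version of Guran's theorem (\Cref{th:GuranSIN}) to pass to coordinates in a product of second countable metrizable SIN groups, and the dense embedding into Polish SIN completions (\Cref{th:separableMetrisableEmbedsPolish}) so that hypothesis (iii) applies. The only cosmetic difference is that you invoke the Hausdorff reduction at the start rather than at the end, which changes nothing of substance.
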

One can of course add similar statements replacing the above classes of SIN target groups by any property in between, such as being $\aleph_0$-bounded / separable / second countable and additionally being Hausdorff or even metrizable. 
\begin{proof}
As any separable topological group is $\aleph_0$-bounded and since Polish groups are separable, it suffices to prove that \ref{it:target:IACPolish} implies \ref{it:target:IACAleph0}. 
Assume that \ref{it:target:IACPolish} holds and consider an $\aleph_0$-bounded SIN group $H$ and a homomorphism $\varphi: G \to H$. Consider the quotient group $\widetilde{H} = H/\cl{\sete{1_H}}$ and $\widetilde{\varphi}: G \to \widetilde{H}: g \mapsto \varphi(g)\cl{\sete{1_H}}$. Since $\widetilde{H}$ is $\aleph_0$-bounded Hausdorff SIN, by the modified Guran Theorem \ref{th:GuranSIN} there exist second countable metrizable SIN groups $\sete{K_i}_i$ such that $\widetilde{H}$ can be identified with a subgroup of $\prod_i K_i$. Consider $\widetilde{\varphi}: G \to \prod_i K_i$ and its component homomorphisms $\widetilde{\varphi}_i: G \to K_i$. Using \Cref{th:separableMetrisableEmbedsPolish} we can embed every $K_i$ into a Polish SIN group $\cl{K_i}$. By \ref{it:target:IACPolish} the associated homomorphisms $\widetilde{\varphi}_i: G \to \cl{K_i}$ are continuous. Hence so are the $\widetilde{\varphi}_i: G \to K_i$. It follows that $\widetilde{\varphi}: G \to \prod_i K_i$ is continuous as well and therefore the same is true for $\widetilde{\varphi}: G \to \widetilde{H}$. Finally \Cref{th:continuousTargetQuotientByClosureOfIdentity} yields that our original homomorphism $\varphi: G \to H$ is continuous, as required.
\end{proof}

We now provide a characterization of invariant automatic continuity in the spirit of \cite[Proposition 1.2]{Rosendal}, which characterizes automatic continuity in terms of continuity of homomorphisms into the universal Polish group $\mathrm{Homeo}([0,1]^{\mathbb{N}})$. 
The result below follows from the recent work of Doucha, see \cite{Doucha}, who proved the existence of a universal Polish SIN group $H$, meaning that any Polish SIN group is topologically isomorphic to a closed subgroup of $H$. 
\begin{Proposition}\label{prop:Universal}
	Let $G$ be a topological group. The following are equivalent.
	\begin{enumerate}[label=(\roman*)]
		\item \label{it:general} The topological group $G$ has (IAC).
		\item \label{it:special} Any homomorphism from $G$ to a fixed universal Polish SIN group is continuous. 
	\end{enumerate}
\end{Proposition}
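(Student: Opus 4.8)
The plan is to derive this equivalence formally from the already-established \Cref{th:IACTarget} together with Doucha's universality result. Fix once and for all a universal Polish SIN group $H$, so that every Polish SIN group is topologically isomorphic to a closed subgroup of $H$.

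The implication $\ref{it:general} \Rightarrow \ref{it:special}$ requires essentially no work: a universal Polish SIN group is in particular a Polish, hence separable, SIN group, so if $G$ has (IAC) then by the very definition of (IAC) every homomorphism $G \to H$ is continuous. I would simply record this observation.

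For the converse $\ref{it:special} \Rightarrow \ref{it:general}$ I would invoke \Cref{th:IACTarget} to reduce the verification of (IAC) to the continuity of homomorphisms into Polish SIN groups. So let $K$ be an arbitrary Polish SIN group and $\varphi: G \to K$ a homomorphism. By universality there is a topological embedding $\iota: K \to H$. Then $\iota \circ \varphi: G \to H$ is a homomorphism into the fixed universal Polish SIN group, hence continuous by hypothesis \ref{it:special}. Since $\iota$ restricts to a topological isomorphism $K \to \iota(K)$, its inverse $\iota^{-1}: \iota(K) \to K$ is continuous; corestricting $\iota \circ \varphi$ to its image $\iota(K)$ and composing with $\iota^{-1}$ recovers $\varphi$, which is therefore continuous. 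As $K$ was an arbitrary Polish SIN group, \Cref{th:IACTarget} then yields (IAC).

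The argument is entirely formal once universality is in hand, so I do not expect a genuine obstacle: the only substantial input is Doucha's theorem, and the remaining steps amount to pulling continuity back along a topological embedding. I would remark in particular that the closedness of the image $\iota(K)$ in $H$, although part of Doucha's statement, plays no role here --- all that is used is that $\iota$ is a topological embedding, so that continuity of $\varphi$ can be read off from continuity of $\iota \circ \varphi$.
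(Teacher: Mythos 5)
Your proof is correct and takes essentially the same route as the paper: the forward implication is noted as a special case of the definition of (IAC), and the converse reduces to Polish SIN targets via \Cref{th:IACTarget} and then identifies an arbitrary Polish SIN group with a subgroup of the universal group from Doucha's theorem, pulling continuity back along the embedding. Your closing remark is also accurate --- the paper likewise uses only that the identification is a topological embedding, and the closedness of the image in Doucha's statement plays no role in the argument.
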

\begin{proof}
	\ref{it:general} $\Rightarrow$ \ref{it:special}: This is a special case of the definition of (IAC).
	
	\ref{it:special} $\Rightarrow$ \ref{it:general}: Using \Cref{th:IACTarget} it suffices to show that any homomorphism $\varphi: G \to K$ into a Polish SIN group $K$ is continuous. By \cite[Theorem 0.2]{Doucha} there exists a universal Polish SIN group $H$, i.e.\ $K$ may be identified with a closed subgroup of $H$. As any homomorphism $G\to H$ is continuous by assumption, so is $\varphi:G\to K$.		
\end{proof}

One can also characterize (IAC) on a group $G$ by considering which group topologies $G$ admits. Below the comparison $\tau' \subseteq \tau$ of topologies means that $\tau$ is finer (or stronger) than $\tau'$. The corresponding statement for (AC), i.e.\ after removing the SIN condition everywhere in the theorem below, is also valid.
\begin{Theorem}
\label{th:IACTop}
Let $(G, \tau)$ be a topological group. Then the following are equivalent.
\begin{enumerate}[label=(\roman*)]
\item \label{it:top:IAC} The topological group $(G, \tau)$ has (IAC).
\item \label{it:top:finerThanSINBounded} $\tau' \subseteq \tau$ for every $\aleph_0$-bounded SIN group topology $\tau'$ on $G$.
\item \label{it:top:finerThanSINSC} $\tau' \subseteq \tau$ for every second countable SIN group topology $\tau'$ on $G$.
\end{enumerate}

Moreover, if $\tau$ is second countable Hausdorff SIN, then the following statement is also equivalent.
\begin{enumerate}[resume, label=(\roman*)]
\item \label{it:top:finerThanSINSCMetr} 
$\tau' \subseteq \tau$ for every second countable metrizable SIN group topology $\tau'$ on $G$.
\end{enumerate}
\end{Theorem}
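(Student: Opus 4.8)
The plan is to prove the cycle of implications $\ref{it:top:IAC} \Rightarrow \ref{it:top:finerThanSINBounded} \Rightarrow \ref{it:top:finerThanSINSC} \Rightarrow \ref{it:top:IAC}$, treating the extra equivalence with \ref{it:top:finerThanSINSCMetr} separately at the end. The conceptual bridge throughout is that comparing a topology $\tau'$ with $\tau$ is the same as asking whether the identity map $\Id: (G,\tau) \to (G,\tau')$ is continuous: indeed $\tau' \subseteq \tau$ holds precisely when every $\tau'$-open set is $\tau$-open, i.e.\ when $\Id$ pulls $\tau'$-opens back to $\tau$-opens. So each topology statement is really a statement about continuity of a particular homomorphism (the identity, viewed as a group homomorphism), which is exactly what (IAC) controls.

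First I would show \ref{it:top:IAC} $\Rightarrow$ \ref{it:top:finerThanSINBounded}. Given an $\aleph_0$-bounded SIN topology $\tau'$ on $G$, the group $(G,\tau')$ is an $\aleph_0$-bounded SIN group, and the identity map $\Id: (G,\tau) \to (G,\tau')$ is a group homomorphism into it. By \Cref{th:IACTarget} (specifically the equivalence of \ref{it:target:IAC} with \ref{it:target:IACAleph0}), (IAC) for $(G,\tau)$ forces this homomorphism to be continuous, which is exactly $\tau' \subseteq \tau$. The implication \ref{it:top:finerThanSINBounded} $\Rightarrow$ \ref{it:top:finerThanSINSC} is immediate once one recalls that every second countable group is separable, hence $\aleph_0$-bounded, so the hypothesis in \ref{it:top:finerThanSINBounded} is applied to a smaller class of topologies and the conclusion is inherited.

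The substantive implication is \ref{it:top:finerThanSINSC} $\Rightarrow$ \ref{it:top:IAC}, and this is where I expect the real work to lie. By \Cref{th:IACTarget} it suffices to show that every homomorphism $\varphi: G \to K$ into a Polish (in particular second countable) SIN group $K$ is continuous. The natural idea is to pull the topology back: define $\tau' = \setp{\varphi^{-1}(V)}{V \subseteq K \text{ open}}$, the initial topology on $G$ induced by $\varphi$. This $\tau'$ is a group topology (since $\varphi$ is a homomorphism and the operations on $K$ are continuous), and it is SIN because SIN is preserved under initial topologies as noted in the preliminaries. The crux is second countability: $\tau'$ need not be second countable, since $\varphi$ may fail to be injective and the preimages of a countable base of $K$ only give a countable base for $\tau'$ when we do not collapse information, which in fact is fine — a countable base of $K$ pulls back to a countable family generating $\tau'$, so $\tau'$ \emph{is} second countable. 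With $\tau'$ verified to be a second countable SIN group topology, hypothesis \ref{it:top:finerThanSINSC} gives $\tau' \subseteq \tau$, and since $\varphi: (G,\tau') \to K$ is continuous by construction, composing with $\Id: (G,\tau) \to (G,\tau')$ (continuous because $\tau' \subseteq \tau$) shows $\varphi: (G,\tau) \to K$ is continuous. The delicate point to get right is that the initial topology $\tau'$ is genuinely second countable and SIN, which is where I would spend the most care.

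Finally, for the moreover clause under the standing assumption that $\tau$ is second countable Hausdorff SIN, I would establish that \ref{it:top:finerThanSINSCMetr} is equivalent to the others. Clearly \ref{it:top:finerThanSINSC} $\Rightarrow$ \ref{it:top:finerThanSINSCMetr} since metrizable topologies form a subclass. For the reverse, the key observation is that under these hypotheses on $\tau$ one can reduce any second countable SIN $\tau'$ to a metrizable one without changing the relevant comparison: since $\tau$ is Hausdorff and we are comparing $\tau' \subseteq \tau$, one may replace $\tau'$ by its Hausdorffification $\tau'/\cl{\sete{1_G}}^{\tau'}$, and a second countable Hausdorff SIN topology is metrizable by the Birkhoff-Kakutani Theorem \ref{th:BirkhoffKakutani} together with \Cref{th:BirkhoffKakutaniSIN} (second countable $\Rightarrow$ first countable, so Hausdorff plus first countable gives metrizability). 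The main obstacle here is bookkeeping the passage to the Hausdorff quotient and checking that $\tau' \subseteq \tau$ for the metrizable quotient topology implies the same for the original $\tau'$; this uses that $\tau$ is itself Hausdorff so that $\cl{\sete{1_G}}^{\tau} = \sete{1_G}$ and the quotient by the (smaller) $\tau'$-closure of the identity does not introduce obstructions on the $\tau$ side.
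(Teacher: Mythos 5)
Your handling of the first three equivalences is correct and essentially identical to the paper's proof: \ref{it:top:IAC} $\Rightarrow$ \ref{it:top:finerThanSINBounded} via the identity homomorphism $\Id: (G,\tau) \to (G,\tau')$ and \Cref{th:IACTarget}; \ref{it:top:finerThanSINBounded} $\Rightarrow$ \ref{it:top:finerThanSINSC} since second countable implies $\aleph_0$-bounded; and \ref{it:top:finerThanSINSC} $\Rightarrow$ \ref{it:top:IAC} via the initial topology of a homomorphism into a second countable SIN group (and you correctly resolve that this initial topology is second countable regardless of injectivity, since preimages of a countable base form a countable base).

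The moreover clause, however, has a genuine gap. Hypothesis \ref{it:top:finerThanSINSCMetr} quantifies over group topologies \emph{on $G$}, but the Hausdorffification of $(G,\tau')$ is the quotient topology on the group $G/N$, where $N = \cl{\sete{1_G}}^{\tau'}$; it is not a topology on $G$, so \ref{it:top:finerThanSINSCMetr} cannot be applied to it. The only natural way to bring it back to $G$, pulling back along the projection $\pi: G \to G/N$, returns exactly $\tau'$: in any topological group every open set $U$ is $N$-saturated (if $u \in U$ and $n \in N$, then $u \in \cl{\sete{un}} = unN$, so the open neighborhood $U$ of $u$ must contain $un$), hence $U = \pi^{-1}(\pi(U))$ and the initial topology of $\pi$ coincides with $\tau'$, which is still non-Hausdorff. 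So when $\tau'$ genuinely fails to be Hausdorff, your reduction produces no metrizable topology on $G$ at all and the hypothesis is never invoked. A telltale sign that something is off: your sketch uses only the Hausdorffness of $\tau$, never that $\tau$ is second countable and SIN, yet these assumptions are essential to the paper's argument.

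The correct move, which is the paper's, is to mix $\tau'$ into $\tau$ rather than to quotient: given a homomorphism $\varphi: G \to H$ into a second countable Hausdorff SIN group (this suffices by \Cref{th:IACTarget}, since Polish SIN groups are of this kind), consider the graph map $\varphi': G \to G \times H$, $g \mapsto (g, \varphi(g))$. Its initial topology $\tau_{\varphi'}$ is second countable and SIN because $(G,\tau) \times H$ is --- this is exactly where second countability and SIN of $\tau$ enter --- and it is Hausdorff because $\varphi'$ is injective and $G \times H$ is Hausdorff, so $\sete{1_G} = \varphi'^{-1}\left(\sete{1_{G \times H}}\right)$ is $\tau_{\varphi'}$-closed. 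By the Birkhoff-Kakutani Theorem \ref{th:BirkhoffKakutani} it is then metrizable, so \ref{it:top:finerThanSINSCMetr} gives $\tau_{\varphi'} \subseteq \tau$, making $\varphi'$ and hence its second component $\varphi$ continuous. In your topology-comparison language this amounts to applying \ref{it:top:finerThanSINSCMetr} not to (a quotient of) $\tau'$ but to the join $\tau \vee \tau'$, which is second countable SIN (as the initial topology of the diagonal map into $(G,\tau) \times (G,\tau')$) and Hausdorff simply because it refines the Hausdorff topology $\tau$; the conclusion $\tau \vee \tau' \subseteq \tau$ then yields $\tau' \subseteq \tau$.
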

As for \Cref{th:IACTarget} one may replace the presented classes of SIN topologies in the equivalences of \Cref{th:IACTop} by any of the properties $\aleph_0$-boundedness / separability / second countability and in the case where $\tau$ is second countable Hausdorff SIN, additionally add Hausdorffness or metrizability.
\begin{proof}
The implication \ref{it:top:finerThanSINBounded} $\Rightarrow$ \ref{it:top:finerThanSINSC} is obvious. We show that \ref{it:top:finerThanSINSC} $\Rightarrow$ \ref{it:top:IAC} $\Rightarrow$ \ref{it:top:finerThanSINBounded}.

\ref{it:top:finerThanSINSC} $\Rightarrow$ \ref{it:top:IAC}: Assume that $\tau_G \coloneqq \tau$ contains every second countable SIN group topology on $G$. By \Cref{th:IACTarget} it suffices to show that if $(H, \tau_H)$ is a second countable SIN group and $\varphi: (G, \tau_G) \to (H, \tau_H)$ a homomorphism, then $\varphi$ is continuous. Now the initial topology $\tau_\varphi$ from $\varphi$ is a second countable SIN group topology on $G$. By assumption it follows that $\tau_\varphi \subseteq \tau_G$ which means that $\varphi: (G, \tau_G) \to (H, \tau_H)$ is continuous.

\ref{it:top:IAC} $\Rightarrow$ \ref{it:top:finerThanSINBounded}: Assume that $(G, \tau_G)$ has (IAC) and that $\tau_{\aleph_0\SIN}$ is an $\aleph_0$-bounded SIN group topology on $G$. Then $\Id_G: (G, \tau_G) \to (G, \tau_{\aleph_0\SIN}): g \mapsto g$ must be continuous by \Cref{th:IACTarget}, implying that $\tau_{\aleph_0\SIN} \subseteq \tau_G$. 

This proves the first three equivalences. As the implication \ref{it:top:finerThanSINSC} $\Rightarrow$ \ref{it:top:finerThanSINSCMetr} is clear, it remains to show that \ref{it:top:finerThanSINSCMetr} $\Rightarrow$ \ref{it:top:IAC} when $\tau$ is second countable Hausdorff SIN. By \Cref{th:IACTarget} it suffices to prove that every group homomorphism $\varphi: G \to H$ to a second countable Hausdorff SIN group $H$ is continuous. Consider 
\[\varphi': G \to G \times H: g \mapsto (g, \varphi(g)).\]
Then $\varphi'$ is an injective homomorphism into the group $G \times H$, which is second countable Hausdorff SIN as a product of second countable Hausdorff SIN groups. Now the initial topology $\tau_{\varphi'}$ of $\varphi'$ on $G$ is again second countable Hausdorff SIN (it is Hausdorff as $\sete{1_G} = \varphi'^{-1}(\sete{1_{G \times H}})$ is $\tau_{\varphi'}$-closed by Hausdorffness of $G \times H$). By the Birkhoff-Kakutani Theorem \ref{th:BirkhoffKakutani} it is then even metrizable. Thus $\tau_{\varphi'} \subseteq \tau$ by assumption \ref{it:top:finerThanSINSCMetr}, so that $\varphi'$ is continuous (with respect to $\tau$). Therefore also its second component map $\varphi$ is continuous.
\end{proof}

It is not clear whether a similar statement involving Polish SIN group topologies would be equivalent as well. By the open mapping theorem for Polish groups this would lead to the statement that a Polish SIN group $(G, \tau)$ has (IAC) if (and only if) $\tau$ is the unique Polish SIN topology $G$ admits. In the setting of automatic continuity this is certainly false. For example, we can consider $\mathrm{PU}(n)$, which has a unique Polish group topology \cite[Theorem 11]{GartsidePejic}, but does not have automatic continuity \cite[Example 1.5]{RosendalSurvey}.

\section{Invariant automatic continuity of compact connected simple Lie groups}
\label{sec_cpt}
\subsection{Preliminaries on compact connected simple Lie groups}
We only consider finite dimensional Lie groups. A \textit{Lie group} is a group endowed with the structure of a (second countable Hausdorff) differentiable manifold such that the multiplication and inversion maps are smooth. A Lie group is called \textit{simple} if it is non-abelian and contains no non-trivial closed connected normal subgroups.

Fix a compact connected simple Lie group $L$. Then $\dim L > 0$, so that $L$ is uncountable. On the other hand $Z(L)$ is finite \cite[Proposition 23.3]{Bump}. A \textit{torus} of $L$ is a non-trivial compact connected abelian subgroup. One can show that $L$ admits maximal tori (with respect to inclusion) \cite[Theorem 5.4]{Sepanski} and that these are all conjugate \cite[Corollary 5.10(b)]{Sepanski}. The dimension of a maximal torus is called the \textit{rank} of $L$.
A celebrated result due to Cartan, see e.g.\ \cite[Theorem 16.4]{Bump}, is the following.
\begin{Theorem}[Cartan]
	\label{th:conjMaxTorus}
	Let $L$ be a compact connected Lie group and $T \subseteq L$ a maximal torus. Then every element of $L$ is conjugate to some element of $T$.
\end{Theorem}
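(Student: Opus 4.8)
The plan is to reduce the statement to two facts already recorded in the text: every torus of $L$ is contained in a maximal torus, and all maximal tori are conjugate (\cite[Corollary 5.10(b)]{Sepanski}). Granting these, it suffices to prove the apparently weaker assertion that \emph{every} element of $L$ lies in \emph{some} maximal torus. Indeed, if $x \in T_x$ for a maximal torus $T_x$, then by conjugacy $T_x = g T g^{-1}$ for some $g \in L$, so that $g^{-1} x g \in g^{-1} T_x g = T$; thus $x$ is conjugate to the element $g^{-1} x g$ of $T$. The whole theorem therefore rests on showing that each $x \in L$ sits inside a maximal torus.

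To produce such a torus I would use the surjectivity of the exponential map $\exp \colon \mathfrak{l} \to L$, where $\mathfrak{l}$ is the Lie algebra of $L$. If $x = 1_L$ there is nothing to prove, so write $x = \exp(X)$ with $X \neq 0$ and let $A$ be the closure in $L$ of the one-parameter subgroup $\setp{\exp(tX)}{t \in \mathbb{R}}$. Then $A$ is closed (hence compact, as $L$ is compact), connected (the closure of a connected set) and abelian (the closure of an abelian subgroup), so by Cartan's closed subgroup theorem $A$ is a compact connected abelian Lie subgroup, i.e.\ a torus, with $x \in A$. Since the dimension of a torus is bounded by $\dim L$, among all tori of $L$ containing $A$ there is one, $T_x$, of maximal dimension; any torus properly containing $T_x$ would still contain $A$ and have strictly larger dimension, so $T_x$ is in fact a maximal torus of $L$. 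As $x \in A \subseteq T_x$, the reduction of the first paragraph applies and the proof is complete.

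The one genuinely nontrivial ingredient, and the main obstacle, is the surjectivity of $\exp$ for a compact connected Lie group. Here I would invoke Riemannian geometry: averaging any inner product on $\mathfrak{l}$ over $L$ against the Haar probability measure (available by compactness) yields an $\mathrm{Ad}$-invariant inner product and hence a bi-invariant Riemannian metric on $L$. For such a metric the geodesics through $1_L$ are exactly the one-parameter subgroups $t \mapsto \exp(tX)$, and compactness makes $L$ a complete Riemannian manifold, so by the Hopf--Rinow theorem every $x \in L$ is joined to $1_L$ by a geodesic, whose initial velocity $X \in \mathfrak{l}$ then satisfies $\exp(X) = x$. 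Justifying this step in full (identifying geodesics with one-parameter subgroups, and verifying completeness) is where most of the work lies, whereas the torus bookkeeping of the first two paragraphs is routine.

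An alternative, more classical route avoids $\exp$ entirely and instead studies the conjugation map $q \colon L/T \times T \to L$, $(gT, t) \mapsto g t g^{-1}$, which is well defined since $T$ is abelian. As $\dim(L/T \times T) = \dim L$ and both spaces are compact, connected and orientable, one can assign $q$ a mapping degree; linearising $q$ at a regular element of $T$ shows this degree equals $\pm\,\abs{W}$, where $W$ is the Weyl group, and is in particular nonzero. Since a map of nonzero degree between closed oriented manifolds of equal dimension is surjective, every element of $L$ lies in the image of $q$, i.e.\ is conjugate to an element of $T$. I would nevertheless favour the exponential-map argument, as it directly exploits the conjugacy of maximal tori that is already at our disposal.
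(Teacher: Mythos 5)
The paper does not actually prove this theorem: it records it as a classical result and points to \cite[Theorem 16.4]{Bump}, so the only meaningful comparison is with that cited proof. Your main argument is essentially that proof. Bump establishes the theorem by the same Riemannian route: average an inner product over Haar measure to get an $\mathrm{Ad}$-invariant one, observe that geodesics of the resulting bi-invariant metric through $1_L$ are exactly the one-parameter subgroups, deduce surjectivity of $\exp$ from completeness and Hopf--Rinow, and then conjugate a given element into $T$. Your torus bookkeeping is also correct, up to one routine fact you should state explicitly: a torus properly contained in another torus has strictly smaller dimension (an equal-dimensional closed connected subgroup would contain a neighborhood of the identity of the larger connected torus, hence be all of it); this is what makes ``of maximal dimension among tori containing $A$'' imply ``maximal torus of $L$''.

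The one genuine concern is the reduction step, where you quote \cite[Corollary 5.10(b)]{Sepanski} for conjugacy of maximal tori. In most standard treatments --- and the label ``Corollary'' immediately following the relevant theorem suggests Sepanski is one of them --- conjugacy of maximal tori is \emph{deduced from} the maximal torus theorem, i.e.\ from the very statement you are proving; if that is how your cited source obtains it, your proof is circular as written. (The paper lists conjugacy as background before stating Cartan's theorem, but the paper never undertakes to prove the theorem, so it never confronts this issue; you do.) The gap is cheap to close, and closing it is precisely where Bump's argument differs from yours: instead of citing conjugacy, write $x = \exp(X)$, choose $Y$ in the Lie algebra of $T$ whose one-parameter subgroup is dense in $T$ (Kronecker), and maximize $h \mapsto \langle \mathrm{Ad}(h)X, Y\rangle$ over the compact group $L$, using the $\mathrm{Ad}$-invariant inner product you already constructed. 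At a maximum point $h_0$ one gets $[\mathrm{Ad}(h_0)X, Y] = 0$, so the one-parameter subgroup through $h_0 x h_0^{-1}$ commutes with $T$; together with $T$ it generates a compact connected abelian subgroup containing $T$, which equals $T$ by maximality, whence $h_0 x h_0^{-1} \in T$. The same maximizing argument proves conjugacy of maximal tori independently of Cartan's theorem, so alternatively you may keep your structure verbatim and replace the citation by it. Your second, degree-theoretic route avoids the issue entirely, since it never invokes conjugacy of maximal tori; it is the other classical proof of this theorem.
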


In particular, it follows that $Z(L) \subseteq T$. 

\begin{Proposition}
	\label{th:compactLieBiInvMetric}
	Every compact Lie group admits a bi-invariant metric.
\end{Proposition}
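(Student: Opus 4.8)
The plan is to obtain the metric purely from the two versions of the Birkhoff--Kakutani theorem together with the fact that compact groups are SIN, rather than constructing a Riemannian metric by hand. Let $G$ be a compact Lie group. By the very definition of a Lie group recalled above, $G$ is a second countable Hausdorff differentiable manifold; in particular it is Hausdorff and first countable.

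First I would invoke the Birkhoff--Kakutani Theorem \ref{th:BirkhoffKakutani}: since $G$ is Hausdorff and first countable, it is metrizable. Next, since every compact group is SIN, $G$ is a metrizable SIN group. The SIN version of Birkhoff--Kakutani, \Cref{th:BirkhoffKakutaniSIN}, then asserts that a metrizable SIN group admits a compatible bi-invariant metric, which is exactly the desired conclusion.

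There is essentially no hard step here: all the substance is contained in the cited theorems, and the argument merely checks that their hypotheses are met by a compact Lie group. The only point to verify is that ``compact Lie group'' supplies both the topological prerequisites (Hausdorff, first countable) feeding \Cref{th:BirkhoffKakutani} and the SIN property feeding \Cref{th:BirkhoffKakutaniSIN}; the former is immediate from the manifold structure and the latter from compactness. An alternative, more differential-geometric route would average an arbitrary inner product on the tangent space $T_{1_G}G$ against Haar measure to produce an $\mathrm{Ad}$-invariant inner product and hence a bi-invariant Riemannian distance, but this is heavier and unnecessary given the tools already assembled in this section.
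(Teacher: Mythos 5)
Your proof is correct and is essentially identical to the paper's own argument: both apply the Birkhoff--Kakutani Theorem \ref{th:BirkhoffKakutani} to get metrizability from Hausdorffness and first countability, note that compactness gives SIN, and then conclude via \Cref{th:BirkhoffKakutaniSIN}. No gaps to report.
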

\begin{proof}
	Let $L$ be a compact Lie group. Then $L$ is Hausdorff and first countable, hence metrizable by the Birkhoff-Kakutani Theorem \ref{th:BirkhoffKakutani}. Being compact, $L$ is also SIN. It then follows from \Cref{th:BirkhoffKakutaniSIN} that $L$ admits a bi-invariant metric.
\end{proof}

Let $L$ be a compact connected simple Lie group and $T$ a maximal torus. Then in \cite[p.~17]{StolzThom} the authors use (continuous) characters to construct a continuous map $\lambda: T \to [0, 1]$ such that $\lambda(t) = 0$ for some $t \in T$ if and only if $t \in Z(L)$ \cite[Proposition 4.1]{StolzThom}. Moreover, $\lambda$ can be used to measure the length of products of conjugates in the following sense, see \cite[Theorem 4.6]{StolzThom}.
\begin{Theorem}[Stolz-Thom]
	\label{th:lambdaProp}
	Let $L$ be a compact connected simple Lie group of rank $r$ and let $T$ be a maximal torus. Further let $s, t \in T$ with $\lambda(t) > 0$. For any even integer $m \in \mathbb{N} \setminus \sete{0}$ such that $\lambda(s) \leq m \lambda(t)$ one has
	\[s \in \left(t^L \cup t^{-L}\right)^{4mr^2}.\] 
\end{Theorem}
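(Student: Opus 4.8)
The plan is to reduce everything to the maximal torus, to manufacture pure ``coroot translations'' as short products of conjugates of $t^{\pm 1}$ using Weyl reflections, and then to assemble $s$ out of such translations, tracking the displacements of $s$ and $t$ through the function $\lambda$.

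First I would write $t = \exp X$ and $s = \exp Y$ with $X, Y$ in the Lie algebra $\mathfrak{t}$ of $T$. The Weyl group $W = N_L(T)/T$ is realized inside $L$, so each root $\alpha$ has a reflection $s_\alpha$ implemented by some $n_\alpha \in N_L(T)$ with $n_\alpha \exp(Z) n_\alpha^{-1} = \exp(s_\alpha Z)$ for $Z \in \mathfrak{t}$. Since $s_\alpha Z = Z - \alpha(Z)\alpha^\vee$ and $\mathfrak{t}$ is abelian, the product
\[
t \cdot \bigl(n_\alpha t^{-1} n_\alpha^{-1}\bigr) = \exp(X)\exp(-s_\alpha X) = \exp\bigl(\alpha(X)\,\alpha^\vee\bigr)
\]
is a product of exactly two elements of $t^L \cup t^{-L}$ and equals a pure translation along the coroot $\alpha^\vee$. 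Conjugating this identity by a representative of $w \in W$ keeps both factors inside $t^L \cup t^{-L}$ and produces $\exp(\alpha(X)\, w\alpha^\vee)$. This is the key building block: two conjugates realize a translation of magnitude $|\alpha(X)|$ along any direction in the Weyl orbit $\{w\alpha^\vee\}$.

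Next I would relate these magnitudes to $\lambda$. By the construction of $\lambda$ (\cite[Proposition 4.1]{StolzThom}) the hypothesis $\lambda(t) > 0$ furnishes a root $\gamma$ for which $|\gamma(X)|$ is bounded below by a fixed multiple of $\lambda(t)$, and I would pick $\gamma$ realizing the largest such displacement. Because $L$ is simple, its root system is irreducible, so the span of the single Weyl orbit $\{w\gamma^\vee : w \in W\}$ is a nonzero $W$-invariant subspace of $\mathfrak{t}$ and hence all of $\mathfrak{t}$. Thus, along a suitable basis $w_1\gamma^\vee, \dots, w_r\gamma^\vee$, the building block supplies, in each of these $r$ directions, a translation of the \emph{same} large magnitude $\approx \lambda(t)$ using only two conjugates — crucially sidestepping any difficulty caused by different root lengths. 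Writing $Y = \sum_{i=1}^r d_i\, w_i\gamma^\vee$, I would build each factor $\exp(d_i\, w_i\gamma^\vee)$ by iterating the corresponding block roughly $|d_i|/|\gamma(X)|$ times (using $t^{-1}$, or the opposite Weyl element, to fix signs, and overshooting then correcting to land exactly on $d_i$), and multiply the $r$ factors, which all commute inside $T$.

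The hard part will be the quantitative bookkeeping that yields the exact bound $4mr^2$. One must (i) bound the coordinates by $\sum_i |d_i| \lesssim r^2 \lambda(s)$, which amounts to controlling the inverse of the relevant change-of-basis (Cartan-type) matrix in terms of $r$; (ii) handle the fact that the angle $\tfrac{1}{2\pi}\gamma(\,\cdot\,)$ is only defined modulo the integral lattice, choosing representatives so that $\exp$ reaches the intended element; and (iii) absorb a factor $2$ from pairing $t$ with a conjugate of $t^{-1}$ in each block and a further factor $2$ from the overshoot-and-correct step, where the evenness of $m$ is convenient. Combining these, $r$ coordinates each needing at most $\sim rm$ blocks after invoking $\lambda(s) \le m\lambda(t)$, at two conjugates per block and with the correction factor, gives at most $4mr^2$ elements of $t^L \cup t^{-L}$, as claimed.
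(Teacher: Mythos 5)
This theorem is not proved in the paper at all: it is imported verbatim from Stolz--Thom \cite[Theorem 4.6]{StolzThom}, so your attempt has to be measured against the argument in that reference, whose broad outline (coroot translations produced by pairing $t$ with a conjugate of $t^{-1}$, irreducibility of the root system so that a single Weyl orbit spans $\mathfrak{t}$, coordinatewise counting) you have in fact reproduced. The fatal problem is that all of your building blocks are \emph{Weyl-group} conjugates of $t^{\pm 1}$: every factor you ever multiply lies in the finite set $\setp{\exp(\pm wX)}{w \in W}$, which has at most $2\card{W}$ elements. Consequently the set of elements of $T$ reachable as a product of $N$ of your factors has cardinality at most $(2\card{W})^{N}$ --- it is finite --- whereas the theorem requires reaching every $s$ in $\setp{s \in T}{\lambda(s) \leq m\lambda(t)}$, an uncountable set containing the nonempty open set $\lambda^{-1}([0,m\lambda(t)))$. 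Your translations along $w_i\gamma^\vee$ come only in exact integer multiples of $\gamma(X)$, and nothing in your toolkit moves by any other amount; ``overshooting then correcting'' is therefore not a bookkeeping difficulty to be absorbed into the constant $4mr^2$ but an impossibility, since there is no correction move. Your listed difficulties (i)--(iii) are secondary to this structural obstruction.

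The missing idea is that a pair of conjugates of $t^{\pm 1}$ can realize a \emph{continuum} of coroot translations, provided one conjugates by arbitrary group elements rather than by Weyl representatives. Let $\gamma$ be a root realizing $\lambda(t)$ and let $G_\gamma \subseteq L$ be the associated rank-one subgroup (a quotient of $\mathrm{SU}(2)$). Decomposing $t = t' t''$ with $t'' \in G_\gamma \cap T$ and $t'$ centralizing $G_\gamma$, one checks that $t \cdot \left(g t^{-1} g^{-1}\right) = t'' \cdot \left(g\, t''^{-1} g^{-1}\right)$ for every $g \in G_\gamma$; and inside $\mathrm{SU}(2)$, where every element is conjugate to its inverse, the product of the conjugacy class of an element of angle $\theta$ with itself consists of \emph{all} elements of angle at most $2\theta$. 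So as $g$ ranges over $G_\gamma$, two factors from $t^L \cup t^{-L}$ sweep out every $\exp(c\,\gamma^\vee)$ with $\abs{c}$ up to (a constant times) $\lambda(t)$, not just the single endpoint your block produces. With this continuous block replacing your rigid one, the rest of your plan --- Weyl conjugation to rotate $\gamma^\vee$ through a spanning set, irreducibility of the Weyl action for simple $L$, and the count of roughly $r$ directions times $O(mr)$ blocks at two conjugates each --- does go through and yields the stated exponent $4mr^2$; this rank-one mechanism is exactly the ingredient on which the proof in \cite{StolzThom} rests.
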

\noindent Here $t^L$ denotes the conjugacy class of $t$ and $t^{-L}$ that of $t^{-1}$.

\subsection{Compact connected simple Lie groups have (IAC)}
Equipped with the results of the previous sections, we can now prove that compact connected simple Lie groups have invariant automatic continuity. In the proof we show that such a group is invariant Steinhaus and then must have (IAC) by Theorem \ref{th:invSteinhaus}.

\begin{Theorem}\label{th:IACcptsimple}
Every compact connected simple Lie group of rank $r$ has the invariant Steinhaus property with exponent $8r^2$, hence has the invariant automatic continuity property (IAC).
\end{Theorem}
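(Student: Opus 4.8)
The plan is to verify the invariant Steinhaus property with exponent $8r^2$ directly and then invoke \Cref{th:invSteinhaus}. Fix a maximal torus $T \subseteq L$ and the Stolz--Thom function $\lambda : T \to [0,1]$, and let $W \subseteq L$ be symmetric, conjugacy-invariant and countably syndetic; the goal is to show $1_L \in \Int W^{8r^2}$. The first step is to locate a torus element $t \in W \cap T$ with $\lambda(t) > 0$. By \Cref{th:conjMaxTorus} every conjugacy class meets $T$, and since $W$ is conjugacy-invariant this gives $W = \bigcup_{s \in W \cap T} s^L$. If we had $\lambda(s) = 0$ for all $s \in W \cap T$, then $W \cap T \subseteq Z(L)$, so $W$ would be contained in the finite set $Z(L)$; but countable syndeticity means $L = T_0 W$ for some countable $T_0$, forcing $L$ to be countable and contradicting $\dim L > 0$. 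Hence some $t \in W \cap T$ has $\lambda(t) > 0$.

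Next I would feed this $t$ into \Cref{th:lambdaProp} with the choice $m = 2$. Since $W$ is symmetric we have $t^{-1} \in W$, and since $W$ is conjugacy-invariant both $t^L$ and $t^{-L}$ lie in $W$; therefore $(t^L \cup t^{-L})^{8r^2} \subseteq W^{8r^2}$. \Cref{th:lambdaProp} then yields $s \in W^{8r^2}$ for every $s \in T$ with $\lambda(s) \le 2\lambda(t)$. Because $W^{8r^2}$ is a product of conjugacy-invariant sets it is itself conjugacy-invariant, so in fact the full conjugacy class $s^L$ lies in $W^{8r^2}$ for each such $s$.

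It remains to upgrade this torus-level statement into an open neighborhood of $1_L$ in $L$. The natural device is to extend $\lambda$ to a continuous class function $\tilde\lambda : L \to [0,1]$: since $\lambda$ is built from (global) characters, which are continuous class functions on $L$, one obtains $\tilde\lambda$ with $\tilde\lambda|_T = \lambda$ and $\tilde\lambda(g) = 0$ exactly when $g \in Z(L)$. Then $N \coloneqq \tilde\lambda^{-1}\bigl([0, 2\lambda(t))\bigr)$ is open in $L$ and contains $1_L$. Every $g \in N$ is, by \Cref{th:conjMaxTorus}, conjugate to some $s \in T$ with $\lambda(s) = \tilde\lambda(g) < 2\lambda(t)$, hence $g \in s^L \subseteq W^{8r^2}$ by the previous step. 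Thus $N \subseteq W^{8r^2}$ and $1_L \in \Int W^{8r^2}$, establishing the invariant Steinhaus property; (IAC) then follows from \Cref{th:invSteinhaus}.

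I expect the main obstacle to be this last passage from $T$ to $L$: one must ensure that the sublevel set, after saturation under conjugation, really is an $L$-neighborhood of the identity and not merely a $T$-neighborhood. Making the extension $\tilde\lambda$ rigorous (or, equivalently, arguing directly that the conjugation-saturation of the open set $\setp{s \in T}{\lambda(s) < 2\lambda(t)}$ is open in $L$) is where the care is needed; the counting argument producing $t$ and the Stolz--Thom input are comparatively routine once this is in place.
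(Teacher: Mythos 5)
Your first two steps coincide with the paper's proof: the counting argument producing $t \in (W \cap T) \setminus Z(L)$ with $\lambda(t) > 0$, and the application of \Cref{th:lambdaProp} with $m = 2$ to get $U \coloneqq \setp{s \in T}{\lambda(s) < 2\lambda(t)} \subseteq \left(t^L \cup t^{-L}\right)^{8r^2} \subseteq W^{8r^2}$, are exactly what the paper does. The problem is the final step, which you yourself flag as the main obstacle and then do not close; moreover, the justification you sketch for it is incorrect as stated. The characters entering the Stolz--Thom construction of $\lambda$ are characters of the torus $T$ (roots, i.e.\ homomorphisms $T \to \mathrm{U}(1)$), not characters of representations of $L$: they are not class functions on $L$ and do not extend to $L$ at all, so ``$\lambda$ is built from continuous class functions on $L$'' is not a valid reason for the existence of $\tilde\lambda$. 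What one can salvage is that $\lambda$ is invariant under the Weyl group $\mathcal{W} \coloneqq N_L(T)/T$ (which permutes the roots), and that every $\mathcal{W}$-invariant continuous function on $T$ extends to a continuous class function on $L$; but the continuity of that extension is precisely the non-trivial point --- it amounts to the standard but genuinely substantive fact that $T \hookrightarrow L$ induces a homeomorphism of $T/\mathcal{W}$ onto the space of conjugacy classes of $L$, equivalently that the conjugation-saturation of a $\mathcal{W}$-invariant open subset of $T$ is open in $L$. As written, your proof is incomplete at its decisive step, and the one-line reason offered for that step would fail.

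The paper closes this gap with a much more elementary device that avoids class functions entirely: the bi-invariant metric $d$ on $L$ (\Cref{th:compactLieBiInvMetric}). Since $d$ restricts to a compatible bi-invariant metric on $T$ and $U$ is a $T$-open neighborhood of $1_T$, there is $\eps > 0$ with $B_T(1_T, \eps) \subseteq U$. Now consider the ball $B_L(1_L, \eps)$ in $L$: for $g \in B_L(1_L, \eps)$, Cartan's \Cref{th:conjMaxTorus} gives $h \in L$ with $s \coloneqq hgh^{-1} \in T$, and bi-invariance yields $d(1_L, s) = d(1_L, g) < \eps$, so $s \in B_T(1_T, \eps) \subseteq U \subseteq W^{8r^2}$; since $W^{8r^2}$ is conjugacy-invariant, $g \in W^{8r^2}$. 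Hence $B_L(1_L, \eps)$ is an $L$-open neighborhood of $1_L$ contained in $W^{8r^2}$, which is the invariant Steinhaus property with exponent $8r^2$, and \Cref{th:invSteinhaus} finishes the proof. Note why this is cheaper than your route: one only needs saturation-openness at the identity, and bi-invariance of the metric delivers exactly that, with no appeal to $T/\mathcal{W} \cong L/\mathrm{conj}$. If you prefer to keep your class-function extension, you must prove (or cite) that homeomorphism and the $\mathcal{W}$-invariance of $\lambda$; otherwise, replace the last paragraph of your argument by the metric argument above.
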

\begin{proof}
We denote the compact connected simple Lie group by $L$ and we fix a maximal torus $T$. Consider an arbitrary symmetric countably syndetic conjugacy-invariant subset $W \subseteq L$. Note that $W$ must be uncountable as $L$ is uncountable and hence in particular $W\not\subseteq Z(L)$. 
Take some $w \in W \setminus Z(L)$. By \Cref{th:conjMaxTorus} there exists some $t \in w^L \cap T$. Since $w \notin Z(L)$ we have $t \notin Z(L)$. As $W$ is conjugacy-invariant, we find that $t \in (W \cap T) \setminus Z(L)$. Then $\delta \coloneqq 2\lambda(t) > 0$.

By continuity $U \coloneqq \lambda^{-1}([0, \delta))$ is an open set in $T$ containing $1_T$. Let $u \in U$. Then $\lambda(u) < \delta = 2\lambda(t)$. \Cref{th:lambdaProp} implies that $u \in \left(t^L \cup t^{-L}\right)^{8r^2}$. Thus $U \subseteq \left(t^L \cup t^{-L}\right)^{8r^2}$. Since $W \ni t$ is symmetric, we have $t^{-1} \in W$ and conjugacy-invariance of $W$ implies that $t^L, t^{-L} \subseteq W$. Thus $U \subseteq W^{8r^2}$.

Using \Cref{th:compactLieBiInvMetric} we can find a bi-invariant metric $d: L \times L \to \mathbb{R}_{\geq 0}$ compatible with the topology on $L$. Then $d$ restricts to a compatible bi-invariant metric on $T$ and as $U \ni 1_T$ is open in $T$ there exists an open ball
\[B_T \coloneqq B_T(1_T, \eps) = \setp{s \in T}{d(1_T, s) < \eps} \subseteq U\]
for some $\eps \in \mathbb{R}_{>0}$. Now consider
\[B_L \coloneqq B_L(1_L, \eps) = \setp{g \in L}{d(1_L, g) < \eps}\]
and let $g \in B_L$. By \Cref{th:conjMaxTorus} there exists an $h \in L$ such that $s \coloneqq hgh^{-1} \in T$. Note that
\[d(1_T, s) = d(1_L, s) = d(1_L, hgh^{-1}) = d(h, hg) = d(1_L, g) < \eps\]
by bi-invariance of $d$. It follows that $s \in B_T \subseteq U \subseteq W^{8r^2}$. But as $W^{8r^2}$ is conjugacy-invariant, this means that $g \in W^{8r^2}$. We conclude that $B_L \subseteq W^{8r^2}$. As $B_L \ni 1_L$ is open in $L$, this shows that $L$ is invariant Steinhaus with exponent $8r^2$. Hence $L$ has (IAC) by Theorem \ref{th:invSteinhaus}.
\end{proof}

\section*{Acknowledgements}
The results in this article are mainly taken from the master's thesis of the first named author under supervision of the second named author. L.D. was partially supported by European Research Council Consolidator Grant 614195 RIGIDITY. P.A.D. was also supported by European Research Council Consolidator Grant 614195 RIGIDITY and by European Research Council Consolidator Grant 681207.

\end{document}